\documentclass[leqno,10pt]{amsart}

\usepackage{amsfonts}
\usepackage[latin1]{inputenc}
\usepackage{amsmath}
\usepackage{amsfonts}
\usepackage{amssymb}
\usepackage{amscd}
\usepackage{amsthm}


\newcommand{\pa}[1]{{\left(#1\right)}}                  
\newcommand{\abs}[1]{{\left|#1\right|}}                 
\newcommand{\pair}[1]{\left\langle#1\right\rangle}      

\newcommand{\metric}{\left\langle\,,\,\right\rangle}                          
\newcommand{\riemann}[1]{\pa{#1,\metric}}                 

\newtheorem{theorem}{Theorem}
\newtheorem{corollary}[theorem]{Corollary}
\newtheorem{lemma}[theorem]{Lemma}
\newtheorem{proposition}[theorem]{Proposition}
\newtheorem{remark}[theorem]{Remark}

\newcommand{\SSS}{\mathrm S}
\newcommand{\TTT}{\mathrm T}
\newcommand{\RRR}{\mathrm R}
\newcommand{\WWW}{\mathrm W}
\newcommand{\Ric}{\operatorname{Ric}}
\newcommand{\dist}{\operatorname{dist}}

%
%
%
%
%
%
%
%

\begin{document}
\title{Some geometric analysis on generic Ricci solitons}


\author{Paolo Mastrolia}
\address{Dipartimento di Matematica\\ Universit\`a degli Studi di Milano\\ via Saldini 50, I-20133 Milano, Italy }
                          \email[Paolo Mastrolia]{paolo.mastrolia@gmail.com}

\author{Marco Rigoli}
\address{Dipartimento di Matematica\\ Universit\`a degli Studi di Milano\\ via Saldini 50, I-20133 Milano, Italy }
                          \email[Marco Rigoli]{marco.rigoli@unimi.it}

\author{Michele Rimoldi}
\address{Dipartimento di Scienza e Alta Tecnologia\\ Universit\`a degli Studi dell'Insubria\\ via Valleggio 11, I-22100 Como, Italy }
                          \email[Michele Rimoldi]{michele.rimoldi@gmail.com}


\date{\today}

\begin{abstract}
We study the geometry of complete generic Ricci solitons with the aid of some geometric--analytical tools extending techniques of the usual Riemannian setting.
\keywords{Ricci solitons, $X$--Laplacian, scalar curvature estimates, maximum principles, volume estimates}
\end{abstract}

\maketitle

\section{Introduction and main results}\label{Sect1}

Let $(M, \left\langle \,,\,\right\rangle)$ be an $m$-dimensional, complete, connected Riemannian manifold. A \emph{soliton structure} $(M, \left\langle \,,\,\right\rangle, X)$ on $M$ is the choice (if any) of a smooth vector field $X$ on $M$ and a real constant $\lambda$ such that
\begin{equation}\label{RicSolEq}
  \Ric + \frac 12 \,\mathcal{L}_X\left\langle \,,\,\right\rangle = \lambda \left\langle \,,\,\right\rangle,
\end{equation}
where $\Ric$ denotes the Ricci tensor of the metric $\left\langle \,,\,\right\rangle$ on $M$ and $\mathcal{L}_X\left\langle \,,\,\right\rangle$ is the Lie derivative of this latter in the direction of $X$. In what follows we shall refer to $\lambda$ as to the \emph{soliton constant}. The soliton is called \emph{expanding}, \emph{steady} or \emph{shrinking} if, respectively, $\lambda < 0$, $\lambda = 0$ or $\lambda >0$. If $X$ is the gradient of a potential $f \in C^{\infty}\pa{M}$, \eqref{RicSolEq} takes the form
\begin{equation}\label{GradRicSolEq}
  \Ric + \operatorname{Hess}(f) = \lambda \left\langle \,,\,\right\rangle,
\end{equation}
and the Ricci soliton is called a \emph{gradient Ricci soliton}. Both equations \eqref{RicSolEq} and \eqref{GradRicSolEq} can be considered as perturbations of the Einstein equation
\[
\Ric = \lambda\left\langle \,,\,\right\rangle
\]
and reduce to this latter in case $X$ or $\nabla f$ are Killing vector fields. When $X=0$ or $f$ is constant we call the underlying Einstein manifold a \emph{trivial} Ricci soliton.

Since the appearance of the seminal works of R. Hamilton, \cite{hamilton}, and G. Perelman, \cite{perelman}, the study of gradient Ricci solitons has become the subject of a rapidly increasing investigation mainly directed towards two goals, \emph{classification} and \emph{triviality}; among the enormous literature on the subject we only quote, as a few examples, the papers \cite{ELnM}, \cite{petwylie1}, \cite{petwylie2}, \cite{petwylie3}, \cite{PRRS}, \cite{PRiS}, which, in some sense, are the most related to the present work.

On the other hand relatively little is known about generic Ricci solitons, that is, when $X$ is not necessarily the gradient of a potential $f$, and the majority of the results is concerned with the compact case. For instance, it is well known that generic expanding and steady compact Ricci solitons are trivial (see e.g. \cite{petwylie2}). It is also worth pointing out that on a compact manifold shrinking Ricci solitons always support a gradient soliton structure, \cite{perelman1}, and that every complete non-compact shrinking Ricci soliton with bounded curvature supports a gradient soliton structure, \cite{Na}. Observe that expanding Ricci solitons which do not support a gradient soliton structure were found by J. Lauret, \cite{L} and P. Baird and L. Danielo, \cite{BD}. These spaces exhibit left invariant metrics on Sol and Nil manifolds.

A first important difference is that, in the general case, we cannot make use of the weighted manifold structure $(M, \left\langle \,,\,\right\rangle, e^{-f}d\rm{vol})$ which naturally arises when dealing with gradient solitons. The same applies for related concepts such as the Bakry--Emery Ricci tensor, whose boundedness from below with a suitable radial function, together with an additional assumption on the potential function $f$, gives rise to weighted volume estimates (see \cite{Q}, \cite{WW}, \cite{PRRS}). These facts restrict the applicability of analytical tools such as the weak maximum principle for the diffusion operator $\Delta_f$, weighted $L^p$ Liouville--type theorems and \emph{a priori} estimates that have been considered in previous investigations, (see \cite{PRiS}, \cite{PRRS} for details). Nevertheless, in the general case the soliton structure is encoded in the geometry of an appropriate operator $\Delta_X$ that we shall call the ``$X$--Laplacian'' and that is defined on $u\in C^2(M)$ by
\[
\ \Delta_X u=\Delta u -\pair{X, \nabla u}.
\]
Clearly, if $X$ is the gradient of some function $f$, $\Delta_X$ reduces to the $f$--Laplacian.

In the present work, assuming a suitable growth condition on the vector field $X$,
we prove two results (see Theorem \ref{ThA} and \ref{ThB} below) on general solitons which are sharp enough to recover the corresponding results in \cite{PRiS}, \cite{PRRS} for gradient solitons. Towards this goal we introduce some analytical tools: a function theoretic version of the Omori-Yau maximum principle for $\Delta_X$ (Lemma \ref{FullOmoriYau}), a comparison result for the same operator (Lemma \ref{2.18}), and an \emph{a priori} estimate (Lemma \ref{Motomiya}) similar to that of Theorem 1.31 in \cite{prsMemoirs}. These results should be useful also in other settings (see Proposition \ref{2.52} below and its consequences).

From now on we fix an origin $o \in M$ and let $r(x) = \dist(x, o)$. We set $B_r$ and $\partial B_r$ to denote, respectively, the geodesic ball of radius $r$ centered at $o$ and its boundary.

We are now ready to state our

\begin{theorem}\label{ThA}
  Let $(M, \left\langle \,,\,\right\rangle)$ be a complete Riemannian manifold of dimension $m$ with scalar curvature $\SSS(x)$ and $(M, \left\langle \,,\,\right\rangle, X)$ a soliton structure on $M$ with soliton constant $\lambda$. Assume
  \begin{equation}\label{BoundX}
    |X| \leq \sqrt{G(r)}
  \end{equation}
where $G$ is a smooth function on $\left[0,+\infty\right)$ satisfying
\begin{equation}\label{hpG}
\begin{array}{lll}
&\left(i\right)\,G\left(0\right)>0&\left(ii\right)\,G^{\prime}\left(t\right)\geq 0 \textrm{\,\,on\,\,} \left[0,+\infty\right)\\
&\left(iii\right)G\left(t\right)^{-\frac{1}{2}}\notin L^{1}\left(+\infty\right)&\left(iv\right)\, \limsup_{t\rightarrow+\infty}\frac{tG\left(t^{\frac{1}{2}}\right)}{G\left(t\right)}<+\infty.
\end{array}
\end{equation}
Let
\[
\ \SSS_* = \inf_M \SSS.
\]
  \begin{itemize}
    \item [(i)] \, If $\lambda <0$ then $m\lambda \leq \SSS_* \leq 0$. Furthermore, if $\SSS(x_0) = \SSS_*=m\lambda$ for some $x_0 \in M$, then $(M, \left\langle \,,\,\right\rangle)$ is Einstein and $X$ is a Killing field; while if $\SSS(x_0)=\SSS_{*}=0$ for some $x_0\in M$, then $(M, \left\langle \,,\,\right\rangle)$ is Ricci flat and $X$ is a homothetic vector field.
    \item [(ii)] If $\lambda =0$ then $\SSS_*= 0$. Furthermore, if $\SSS(x_0) = \SSS_*=0$ for some $x_0 \in M$, then $(M, \left\langle \,,\,\right\rangle)$ is Ricci flat and $X$ is a Killing field.
    \item [(iii)] If $\lambda >0$ then $0 \leq \SSS_* \leq m\lambda$. Furthermore, if $\SSS(x_0) = \SSS_*=0$ for some $x_0 \in M$, then $(M, \left\langle \,,\,\right\rangle)$ is Ricci flat and $X$ is a homothetic vector field, while  $\SSS_*<m\lambda$ unless $(M, \left\langle \,,\,\right\rangle)$ is compact, Einstein and $X$ is a Killing field.
  \end{itemize}
\end{theorem}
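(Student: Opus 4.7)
My plan is to follow the template used in the gradient setting of \cite{PRRS,PRiS}: derive a Hamilton--type identity for $\SSS$ under the operator $\Delta_X$, couple it with the elementary bound $|\Ric|^2\geq \SSS^2/m$, and then invoke the Omori--Yau principle for $\Delta_X$ (Lemma \ref{FullOmoriYau}), whose hypotheses are precisely guaranteed by \eqref{BoundX} and \eqref{hpG}. The non--obvious point will be that the clean Hamilton identity survives for generic $X$, by a cancellation involving the antisymmetric part $\omega_{ij}=\frac{1}{2}(\nabla_iX_j-\nabla_jX_i)$ of $\nabla X$.

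I would first trace \eqref{RicSolEq} to obtain $\SSS+\mathrm{div}(X)=m\lambda$. Combining the contracted second Bianchi identity $\nabla^j R_{ij}=\frac{1}{2}\nabla_i \SSS$ with \eqref{RicSolEq} and using the Ricci commutator yields the two companion identities
\[
\Delta X=-\Ric(X),\qquad \nabla^i \omega_{ij}=\tfrac{1}{2}\nabla_j \SSS-R_{jk}X^k.
\]
Expanding $\Delta \SSS=2\nabla^j\bigl(\nabla^i\omega_{ij}+R_{jk}X^k\bigr)$ and invoking the algebraic identity $\nabla^j\nabla^i\omega_{ij}=0$ (valid for \emph{any} $2$--form on a Riemannian manifold, equivalent to $\delta^2=0$ on forms and verified by symmetrizing the Ricci commutator against the pairing with the Ricci tensor) produces
\[
\Delta_X \SSS=2\lambda \SSS-2|\Ric|^2,
\]
formally identical to the gradient--soliton identity. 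Cauchy--Schwarz then gives $\Delta_X \SSS\leq -\frac{2}{m}\SSS(\SSS-m\lambda)$.

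Applying Lemma \ref{FullOmoriYau} to $u=-\SSS$ produces a sequence $\{x_k\}\subset M$ with $\SSS(x_k)\to \SSS_*$ and $\Delta_X\SSS(x_k)>-1/k$; substituting into the inequality and passing to the limit yields $\SSS_*(\SSS_*-m\lambda)\leq 0$, which unifies the three bounds in (i)--(iii). For the rigidity statements, if $\SSS(x_0)=\SSS_*$ is attained then $\nabla \SSS(x_0)=0$ and $\Delta \SSS(x_0)\geq 0$, so $\Delta_X\SSS(x_0)\geq 0$; in each equality case $\SSS_*\in\{0,m\lambda\}$ the identity forces $|\Ric|^2(x_0)=\SSS_*^2/m$, hence $\Ric(x_0)=\tfrac{\SSS_*}{m}\left\langle \,,\,\right\rangle$. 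A strong maximum principle applied to the non--negative function $\SSS-\SSS_*$ (which vanishes at $x_0$ and satisfies a locally Lipschitz inequality $\Delta_X(\SSS-\SSS_*)\leq c(\SSS-\SSS_*)$ coming from the identity combined with $|\Ric|^2\geq \SSS^2/m$), together with the connectedness of $M$, then propagates this to $\Ric\equiv \tfrac{\SSS_*}{m}\left\langle \,,\,\right\rangle$ on all of $M$. Inserting the Einstein condition back into \eqref{RicSolEq} gives $\mathcal{L}_X\left\langle \,,\,\right\rangle=2(\lambda-\SSS_*/m)\left\langle \,,\,\right\rangle$, which vanishes when $\SSS_*=m\lambda$ (so $X$ is Killing) and equals $2\lambda\left\langle \,,\,\right\rangle$ when $\SSS_*=0$ (so $X$ is homothetic). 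Finally, in case (iii), if $\SSS_*=m\lambda>0$ then $\Ric=\lambda\left\langle \,,\,\right\rangle$ with $\lambda>0$ forces $M$ to be compact by Bonnet--Myers.

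The main obstacle is recognizing the cancellation in the derivation of the identity: a direct computation of $\Delta_X\SSS$ naturally produces a term $2\nabla^j\nabla^i\omega_{ij}$, and the argument hinges on the fact that this double divergence of any antisymmetric $2$--tensor vanishes identically, thereby reducing the generic case to the gradient case at the level of the scalar--curvature evolution. A secondary technical point is the propagation step in the rigidity argument, which in the noncompact setting requires a careful application of the strong maximum principle for $\Delta_X$ to extend the pointwise Einstein condition from $x_0$ to all of $M$.
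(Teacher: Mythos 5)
Your overall strategy coincides with the paper's: establish the identity $\Delta_X\SSS=2\lambda\SSS-2|\Ric|^2$ (formula \eqref{eq8}), combine it with $|\Ric|^2\geq\SSS^2/m$ to get $\tfrac12\Delta_X(-\SSS)\geq\lambda(-\SSS)+(-\SSS)^2/m$, extract the bound $\SSS_*(\SSS_*-m\lambda)\leq0$ by a maximum--principle argument, and handle rigidity with the strong maximum principle plus \eqref{RicSolEq} and Myers. Your derivation of \eqref{eq8} is genuinely different from, and slicker than, the paper's: they obtain it by tracing the long tensor computation leading to \eqref{eq7}, whereas you split $\nabla X$ into its symmetric part $\lambda\delta_{ij}-\RRR_{ij}$ and antisymmetric part $\omega$, observe $\nabla^i\omega_{ij}=\tfrac12\nabla_j\SSS-\RRR_{jk}X^k$, and kill the extra term with the identity $\nabla^j\nabla^i\omega_{ij}=0$ (which is correct: the curvature terms in the commutator are Ricci contractions against an antisymmetric tensor). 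This reduces the generic case to the gradient computation at the scalar level and is worth keeping.

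There is, however, a genuine gap in the analytic step. Lemma \ref{FullOmoriYau} applies only to functions $u$ with $u^*=\sup_M u<+\infty$; applied to $u=-\SSS$ this presupposes that $\SSS$ is bounded below, which is neither assumed nor known a priori, and your phrase ``passing to the limit yields $\SSS_*(\SSS_*-m\lambda)\leq0$'' is empty if $\SSS_*=-\infty$. This is precisely why the paper routes the argument through the a priori estimate of Lemma \ref{Motomiya}: for the inequality $\tfrac12\Delta_Xu\geq\lambda u+u^2/m$ with the choice $F(t)=t^2$ it first delivers $u^*<+\infty$ and only then the conclusion $f(u^*)\leq0$. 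You need either to invoke that lemma or to prove boundedness of $\SSS$ from below independently. A secondary, smaller omission: asserting that the hypotheses of Lemma \ref{FullOmoriYau} are ``precisely guaranteed'' by \eqref{BoundX} and \eqref{hpG} hides the actual verification of \eqref{hp2.4} for $\gamma=r^2$, which requires the comparison estimate $\Delta_Xr\leq A\sqrt{\overline{G}}+\sqrt{G}$ of Lemma \ref{2.18} (applied with $\Ric_X(\nabla r,\nabla r)=\lambda$ from the soliton equation) together with the Calabi trick at the cut locus. The rigidity portion of your argument matches the paper's and is fine, modulo the observation that in case (iii) the conclusion ``$\SSS_*<m\lambda$ unless \dots'' is stated for the infimum, not for an attained minimum, so the maximum--principle step there should be phrased accordingly.
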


\begin{remark}\label{RmkSharp}
\rm{
Defining
\[
\ G(t)=At^2\prod_{j=1}^{N}(\log^{j}(t))^2,
\]
for some constant $A>0$ and $t\gg1$, where $\log^{(j)}$ stands for the $j$--th iterated logarithm, and completing appropriately the definition on $[0, +\infty)$, we obtain a family of functions satisfying \eqref{hpG}. In particular this is true for
\[
\ G(t)=1+t^2 \;\textrm{on}\; [0, +\infty).
\]
Note that by Z.--H. Zhang, \cite{ZHZhang}, in case the soliton is a gradient soliton, that is, $X=\nabla f$ for some potential $f\in C^{\infty}(M)$, then
\begin{equation}\label{Zhang}
|\nabla f|\leq c(1+ r(x)),
\end{equation}
for some constant $c>0$. Hence for a gradient soliton the upper
bound \eqref{BoundX} is automatically satisfied. In this way we
recover the scalar curvature estimates of Theorem 3 in \cite{PRiS}
and Theorem 1.4 in \cite{PRRS}.}
\end{remark}

\begin{remark}\rm{ In general for complete Ricci solitons which are not necessarily gradient Ricci solitons, we have no control on the growth rate of the norm of the soliton field $X$. Consider now the vector field $Y\in \mathfrak{X}(M\times J_1)$, given by $Y(x,t)=\frac{X(x)}{1-2\lambda t}+\frac{\partial}{\partial t}$, where $J_1\subseteq\mathbb{R}$ is defined by
\begin{equation*}
J_1=\left\{\begin{array}{ll}
\left(\frac{1}{2\lambda},+\infty\right)&\textrm{if }\lambda<0\\
\mathbb{R}&\textrm{if }\lambda=0\\
\left(-\infty,\frac{1}{2\lambda}\right)&\textrm{if }\lambda>0.
\end{array}\right.
\end{equation*}
Our claim is now that the requirement
 \begin{equation}\label{ZhangX}
 |X|\leq c(1+r(x)),
 \end{equation}
 for some constant $c>0$, permits to conclude, as in \cite{ZHZhang}, that for every $t_0<\frac{1}{2\lambda}$, $t_1>\frac{1}{2\lambda}$ fixed, and for every $t\in\mathbb{R}$ there exist diffeomorphisms $\psi_t:M\times J_2\to M\times J_2$ such that $\psi_0=id_{M\times J_2}$ and $\frac{d}{dt}\psi_t=Y\circ\psi_t$ on $M\times J_2$, where
 \begin{equation*}
J_2=\left\{\begin{array}{ll}
\left(t_1,+\infty\right)&\textrm{if }\lambda<0\\
\mathbb{R}&\textrm{if }\lambda=0\\
\left(-\infty,t_0\right)&\textrm{if }\lambda>0.
\end{array}\right.
\end{equation*}
Toward this aim we have to show that for any fixed $(y,\overline{t})\in M\times J_2$ the maximal interval (containing $0$) $J((y,\overline{t}))=(a((y,\overline{t})),b((y,\overline{t})))$ where the integral curve of $Y$ emanating from $(y,\overline{t})$ is defined, coincides with $J_1$. Let us suppose by contradiction e.g. that, in case $\lambda\leq0$, $b((y,\overline{t}))<+\infty$. By a well--known ``escape'' lemma (see e.g. Lemma 12.11 in \cite{Lee}) we then know that the integral curve $\Phi_{(y,\overline{t})}:J((y,\overline{t}))\to M\times\mathbb{R}$ is a divergent curve. Now, let $\varepsilon=\inf\left\{s\in J((y,\overline{t})):\Phi_{(y,\overline{t})}(s)\in (\,^MB_1(y))^{c}\times J_2\right\}$ and for every $t<b((y,\overline{t}))$ consider the restriction
 \[
 \ \gamma=\left.\Phi_{(y,\overline{t})}\right|_{[\varepsilon, t]}:[\varepsilon, t]\to M\times\mathbb{R}.
 \]
Then
\[
\ l(\gamma)=\int_{\varepsilon}^{t}|\dot{\gamma}(s)|ds.
\]
By \eqref{ZhangX} we have that outside $\,^MB_1(y)$
\begin{equation*}
|X(x)|\leq 2c r(x).
\end{equation*}
Let $\tilde{r}((x,t))=d_{M\times\mathbb{R}}\left((x,t),(y,\overline{t})\right)$. Hence we have that in $(\,^MB_{1}(y))^c\times J_2$
\begin{equation*}
|Y((x,t))|\leq B\tilde{r}\left((x,t) \right),
\end{equation*}
for some constant $B>0$.
Thus
\begin{eqnarray*}
\tilde{r}(\gamma(t))&\leq& d_{M\times\mathbb{R}}(\gamma(t), (y,\overline{t}))\\
&\leq& \tilde{r}\left(\gamma(\varepsilon)\right)+d_{M\times\mathbb{R}}\left(\gamma(\varepsilon),\gamma(t)\right)\\
&\leq&\tilde{r}\left(\gamma(\varepsilon)\right)+l(\gamma)\\
&=&\tilde{r}\left(\gamma(\varepsilon)\right)+\int_{\varepsilon}^{t}|\dot{\gamma}(s)|ds\\
&=&\tilde{r}\left(\gamma(\varepsilon)\right)+\int_{\varepsilon}^{t}|Y(\gamma(s))|ds\\
&\leq& \tilde{r}\left(\gamma(\varepsilon)\right)+B\int_{\varepsilon}^t\tilde{r}(\gamma(s))ds.
\end{eqnarray*}
Writing this in terms of $I(t)=\int_{\varepsilon}^t\tilde{r}(\gamma(s))ds$ and integrating the resulting differential inequality one obtains
\[
\ \int_{\varepsilon}^{t}\tilde{r}\left(\gamma(s)\right)ds\leq B^{\prime}e^{Bt},
\]
for some constant $B^{\prime}>0$. Recalling that $\gamma$ has to be divergent we thus get that $b((y,\overline{t}))=+\infty$, getting the desired contradiction.

By the definition of $Y$ we have that for every $(x,t)\in M\times J_2$ the diffeomorphisms $\psi_t$, $t\in J_2$, can be written in the form
\[
\ \psi_t(x, t)=\left(\varphi_t(x), t\right),
\]
for some diffeomorphisms $\varphi_t:M\to M$. Moreover we have that $\varphi_0=id_M$ and $\frac{d}{dt}\varphi_t(x)=\frac{1}{1-2\lambda t}X(x)$.
Let now $\metric(t)$ be defined by
\begin{equation*}
\metric(t)=(1-2\lambda t)\varphi_t^{*}\metric,
\end{equation*}
We then have that
\begin{eqnarray*}
\frac{d}{dt}\metric(t)&=&\frac{d}{dt}\left((1-2\lambda t)\varphi_t^*\metric\right)\\&=&-2\lambda\varphi_t^*\metric+(1-2\lambda t)\varphi_t^*(\mathcal{L}_{\frac{X}{1-2\lambda t}}\metric)\\
&=&-2\varphi_t^*(\lambda \metric -\frac{(1-2\lambda t)}{2}\mathcal{L}_{\frac{X}{1-2\lambda t}}\metric)\\
&=&-2\varphi_t^*(\lambda \metric-\frac{1}{2}\mathcal{L}_X \metric)\\
&=&-2\varphi_t^*(\Ric(\metric))=-2(\Ric(\metric(t))).
\end{eqnarray*}

Thus for every $t_0<\frac{1}{2\lambda}$ and $t_1>\frac{1}{2\lambda}$ we can define a self--similar solution of the Ricci flow $(M, \metric(t))$ defined respectively on $(-\infty, t_0)$ if $\lambda>0$, on $\mathbb{R}$ if $\lambda=0$ and on $(t_1, +\infty)$ if $\lambda<0$. In particular, a complete Ricci soliton $(M, \left\langle \,,\,\right\rangle, X)$ for which \eqref{ZhangX} holds always corresponds to the ``self--similar'' solution of the Ricci flow it generates.
}
\end{remark}

Our next result is a gap theorem for the traceless Ricci tensor.
\begin{theorem}\label{ThB}
Let $(M, \left\langle \,,\,\right\rangle)$ be a complete Riemannian manifold of dimension $m\geq 3$, scalar curvature $S(x)$ and trace free Ricci tensor $T$. Suppose that
\begin{equation}\label{supS}
i)\, S^{*}=\sup_M S(x)<+\infty; \qquad ii)\, \abs{\WWW}^* = \sup_M \abs{\WWW} <+\infty,
\end{equation}
where $W$ is the Weyl tensor of $\riemann{M}$.
Let $(M, \metric, X)$ be a soliton structure on $M$ with soliton constant $\lambda$. Assume
\begin{equation}\label{BoundX1}
|X|\leq \sqrt{G(r)}
\end{equation}
where $G$ satisfies $\eqref{hpG}$. Then, either $(M, \metric)$ is Einstein or $|T|^*=\sup_M |T|$ satisfies
\begin{equation}\label{1.10}
|T|^*\geq\frac{1}{2}\pa{\sqrt{m(m-1)}\lambda-\frac{m-2}{\sqrt{m(m-1)}}\,\SSS^{*}-\sqrt{\frac{m(m-2)}{2}}\abs{\WWW}^*}.
\end{equation}
\end{theorem}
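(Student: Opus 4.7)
My plan is to apply the function-theoretic form of the Omori--Yau maximum principle for $\Delta_X$ (Lemma \ref{FullOmoriYau}) to $u=\abs{T}^2$. Note that by \eqref{supS} the right-hand side of \eqref{1.10} is finite, so when $\abs{T}^*=+\infty$ the conclusion is automatic; when $(M,\metric)$ is Einstein there is nothing to prove. Hence I may assume $0<\abs{T}^*<+\infty$, and under the hypotheses \eqref{BoundX1}--\eqref{hpG} Lemma \ref{FullOmoriYau} is available for $\abs{T}^2$.

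The core technical step is to establish the Bochner--Weyl inequality
\[
\tfrac12\,\Delta_X\abs{T}^2\geq\abs{\nabla T}^2+2\abs{T}^2\left[\lambda-\frac{m-2}{m(m-1)}\,\SSS-\sqrt{\frac{m-2}{2(m-1)}}\,\abs{\WWW}-\frac{2\abs{T}}{\sqrt{m(m-1)}}\right].
\]
To derive it I would proceed as follows. Starting from the Weitzenb\"ock--Lichnerowicz identity $\Delta R_{ij}=\nabla_i\nabla_j\SSS-2R_{ikjl}R^{kl}+2R_{ik}R_j^{\ k}$, combined with the soliton equation \eqref{RicSolEq} and the identity $\Delta X_i=-R_{ij}X^j$ (which follows from taking the divergence of \eqref{RicSolEq} and invoking the contracted second Bianchi identity), one obtains a formula for $\tfrac12\Delta_X R_{ij}$ which, contracted with $R^{ij}$, yields an expression for $\tfrac12\Delta_X\abs{\Ric}^2$. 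Subtracting the pure-trace part via $\tfrac12\Delta_X\SSS=\lambda\SSS-\abs{\Ric}^2$ (the identity already underlying Theorem \ref{ThA}) produces a corresponding formula for $\tfrac12\Delta_X\abs{T}^2$. I would then decompose the curvature quadratic $R_{ikjl}T^{ij}T^{kl}$ through the Weyl tensor and bound the resulting scalars by Huisken's inequality
\[
\abs{W_{ikjl}T^{ij}T^{kl}}\leq\sqrt{\frac{m-2}{2(m-1)}}\,\abs{\WWW}\abs{T}^2
\]
and Okumura's inequality for the cubic-in-$T$ contribution,
\[
\abs{T_{ij}T^{j}_{\ k}T^{ki}}\leq \frac{m-2}{\sqrt{m(m-1)}}\,\abs{T}^3,
\]
whose combination produces the displayed Bochner--Weyl inequality.

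With the Bochner--Weyl inequality in hand, Lemma \ref{FullOmoriYau} furnishes a sequence $\{x_n\}\subset M$ along which $\abs{T}^2(x_n)\to(\abs{T}^*)^2$ and $\Delta_X\abs{T}^2(x_n)\leq 1/n$. Evaluating at $x_n$, discarding the nonnegative $\abs{\nabla T}^2$, using $\SSS(x_n)\leq\SSS^*$ and $\abs{\WWW}(x_n)\leq\abs{\WWW}^*$, dividing by $2(\abs{T}^*)^2>0$ and letting $n\to\infty$ gives
\[
0\geq \lambda-\frac{m-2}{m(m-1)}\,\SSS^*-\sqrt{\frac{m-2}{2(m-1)}}\,\abs{\WWW}^*-\frac{2\abs{T}^*}{\sqrt{m(m-1)}},
\]
which rearranges to \eqref{1.10} after multiplication by $\sqrt{m(m-1)}$.

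The hardest step is establishing the Bochner-type identity for $\tfrac12\Delta_X\abs{\Ric}^2$ in the non-gradient setting. In the gradient case one exploits the clean identity $\nabla_i\SSS=2R_{ij}\nabla^jf$ to absorb the term $-\pair{\nabla f,\nabla(\cdot)}$ of $\Delta_f$; in our setting the antisymmetric part $\tfrac12(\nabla_iX_j-\nabla_jX_i)$ of $\nabla X$ does not vanish, and one must carefully track the extra contributions and verify that they regroup either into the advective term $-\pair{X,\nabla(\cdot)}$ that reconstructs $\Delta_X$, or into curvature quadratics eventually absorbed by the Huisken--Okumura algebra. Once this algebraic bookkeeping is done, the rest of the argument parallels the gradient treatment of \cite{PRRS}.
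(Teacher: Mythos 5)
Your proposal is correct and follows essentially the same route as the paper: the formula for $\tfrac12\Delta_X\abs{T}^2$ (the paper's Lemma \ref{XlapT}, in whose derivation the antisymmetric part of $\nabla X$ indeed cancels upon contraction with the symmetric tensor $\RRR_{ik}$, exactly as you anticipate), estimated below via Okumura's lemma and Huisken's bound \eqref{huisEst}, followed by the Omori--Yau principle of Lemma \ref{FullOmoriYau} applied to $u=\abs{T}^2$ along a maximizing sequence. The only cosmetic slip is the missing factor $\tfrac12$ on the $\nabla_i\nabla_j\SSS$ term in the Weitzenb\"ock identity you quote as the starting point; the final Bochner--Weyl inequality you commit to is the correct one and matches \eqref{2.34}.
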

In case $\riemann{M}$ is conformally flat, since Remark
\ref{RmkSharp} above applies again this result recovers Theorem
1.9 of \cite{PRRS} for gradient Ricci solitons.

As an immediate consequence of Theorem \ref{ThA} we have
\begin{corollary}
Let $(M, \left\langle \,,\,\right\rangle)$ be a complete Riemannian manifold admitting a shrinking or steady soliton structure $(M, \left\langle \,,\,\right\rangle, X)$ satisfying \eqref{BoundX1} and \eqref{hpG}. Then any minimal immersion of $(M, \left\langle \,,\,\right\rangle)$ into $\mathbb{R}^n$, $n>m=dim M$ is totally geodesic.
\end{corollary}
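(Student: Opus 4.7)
The plan is to combine the scalar curvature lower bound from Theorem \ref{ThA} with the Gauss equation for minimal immersions into Euclidean space, which forces the scalar curvature to be non-positive. The two bounds together pin $\SSS \equiv 0$ and hence $|II|\equiv 0$.

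More precisely, first I recall that for an isometric immersion $\iota\colon (M,\metric)\to\mathbb{R}^n$, the Gauss equation yields, with respect to any local orthonormal frame $\{e_i\}$,
\[
\SSS = m^{2}|H|^{2}-|II|^{2},
\]
where $II$ is the second fundamental form and $H=\frac{1}{m}\operatorname{tr}(II)$ is the mean curvature vector. Minimality of $\iota$ means $H\equiv 0$, so
\[
\SSS(x) = -|II(x)|^{2}\leq 0 \quad \text{on } M.
\]

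Next I invoke Theorem \ref{ThA}: under the hypotheses \eqref{BoundX1} and \eqref{hpG}, in the steady case $\lambda=0$ part (ii) gives $\SSS_{*}=0$, while in the shrinking case $\lambda>0$ part (iii) gives $0\leq\SSS_{*}\leq m\lambda$. In either situation
\[
\SSS(x)\geq \SSS_{*}\geq 0 \quad \text{on } M.
\]
Combining this with the inequality coming from the Gauss equation forces $\SSS\equiv 0$ on $M$, whence $|II|\equiv 0$, so the immersion is totally geodesic.

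There is essentially no obstacle here beyond invoking the correct item of Theorem \ref{ThA}; the corollary is a direct algebraic consequence. Note that the expanding case $\lambda<0$ is excluded from the hypotheses precisely because Theorem \ref{ThA}(i) only gives $\SSS_{*}\leq 0$, which is compatible with the Gauss-equation bound and does not allow one to conclude $|II|\equiv 0$.
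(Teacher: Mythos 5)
Your argument is correct and is essentially the paper's own proof: both rest on the Gauss equation giving $\SSS=-|II|^2\leq 0$ for a minimal immersion into Euclidean space, played against the lower bound $\SSS_*\geq 0$ from parts (ii) and (iii) of Theorem \ref{ThA}. The paper phrases it as a contradiction while you argue directly, but the content is identical.
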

\begin{proof}
Indicating by $II$ the second fundamental tensor of the immersion by Gauss equations we have
\[
\ \SSS(x)=-|II|^2(x).
\]
Thus if the immersion is not totally geodesic $\SSS_{*}<0$, contradicting (iii) or (ii) of Theorem \ref{ThA}.
\end{proof}
Because of \eqref{Zhang} the above Corollary specializes to
\begin{corollary}
Let $(M, \left\langle \,,\,\right\rangle)$ be a complete Riemannian manifold admitting a shrinking or steady gradient soliton structure $(M, \left\langle \,,\,\right\rangle, \nabla f)$. Then any minimal immersion of $(M, \left\langle \,,\,\right\rangle)$ into $\mathbb{R}^n$, $n>m=dim M$ is totally geodesic.
\end{corollary}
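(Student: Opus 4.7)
\emph{Proof plan.} The plan is to deduce this Corollary directly from the preceding one by verifying that its hypotheses \eqref{BoundX1} and \eqref{hpG} are automatic in the gradient case. Once this reduction is made, no new argument is needed: the Gauss--equation identity $\SSS(x) = -|II|^2(x)$ invoked in the previous proof applies unchanged, and the contradiction with parts (ii) and (iii) of Theorem \ref{ThA} transfers verbatim.

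The essential input is Zhang's a priori estimate \eqref{Zhang}: on any complete gradient Ricci soliton the potential $f$ satisfies $|\nabla f| \leq c(1+r(x))$ for some constant $c>0$, with no additional assumption. Setting $X = \nabla f$ and choosing $G(t) = c^2(1+t)^2$ (or, up to constants, $G(t) = 1+t^2$ as in Remark \ref{RmkSharp}), one has $|X| \leq \sqrt{G(r)}$ by construction. A direct check confirms that $G$ is smooth and positive on $[0,+\infty)$, nondecreasing, has $G^{-1/2}$ non-integrable at infinity (since $G(t)^{-1/2} \sim 1/t$), and satisfies $tG(t^{1/2})/G(t) \to 1$ as $t \to +\infty$, so all four conditions in \eqref{hpG} hold. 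This is precisely the $N=0$ member of the family highlighted in Remark \ref{RmkSharp}.

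With an admissible $G$ in hand, the previous Corollary applies to the gradient soliton structure $(M, \metric, \nabla f)$, forcing any minimal immersion of $(M,\metric)$ into $\mathbb{R}^n$ to be totally geodesic. I do not foresee any substantive obstacle here: the statement is a direct specialization of the general soliton Corollary, enabled entirely by Zhang's universal growth bound on $|\nabla f|$, so the proof should amount to little more than citing \eqref{Zhang} and invoking the prior result.
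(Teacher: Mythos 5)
Your proposal is correct and matches the paper's argument exactly: the paper likewise derives this corollary from the preceding one by invoking Zhang's bound \eqref{Zhang} to verify \eqref{BoundX1} with an admissible $G$ (as already noted in Remark \ref{RmkSharp}). Nothing further is needed.
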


\section{Some basic formulas}
The aim of this section is to collect and prove some basic formulas for generic Ricci solitons. Formula \eqref{eq8} of Lemma \ref{XlapRicS} and formula \eqref{XlapT1} of Lemma \ref{XlapT} are the basic ingredients in the proofs respectively of Theorems \ref{ThA} and \ref{ThB}. Their derivation exploit the symmetries of the curvature tensor, in particular the second Bianchi identity, coupled with the soliton equation \eqref{RicSolEq} via covariant differentiation. Since the process is quite involved we have divided the proofs in a number of steps. In Lemma \ref{LemmaComm} below we recall some standard commutation relations while in Lemma \ref{LemmaId} we prove new commutation rules which are related to the soliton structure. These allow us, after some efforts, to prove the basic equations \eqref{eq7} and \eqref{eq8} of Lemma \ref{XlapRicS} and \eqref{XlapT1} of Lemma \ref{XlapT}. These formulas seem to be new and of independent interest. Furthermore, they also make clear the naturality, in this context, of the operator $\Delta_X$ defined in Section \ref{Sect1}. 

In what follows, to perform computations, we shall use the method of the moving frame referring to a local orthonormal coframe. Here we fix the index range $1\leq i, j, \ldots \leq m = \operatorname{dim}M$ and we use the Einstein summation convention throughout.

\begin{lemma}\label{LemmaComm} Let $X$ be a vector field on a Riemannian manifold $(M, \left\langle \,,\,\right\rangle)$. Then the following commutation formulas hold:
\begin{eqnarray}
&\label{a:1} X_{ijk}-X_{ikj}=X_{t}\RRR_{tijk};\\
&\label{a:2} X_{ijkl}-X_{ikjl}=\RRR_{tijk}X_{tl}+\RRR_{tijk,l}X_{t}; \\
&\label{a:3} X_{ijkl}-X_{ijlk}=\RRR_{tikl}X_{tj}+\RRR_{tjkl}X_{it}.
\end{eqnarray}
\end{lemma}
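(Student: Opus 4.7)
The plan is to derive all three identities from the standard Ricci commutation formulas of Riemannian geometry, together with the Leibniz rule for covariant differentiation. No soliton hypothesis is needed for this lemma, which is a purely tensorial statement.

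First I would establish \eqref{a:1}. Viewing the 1-form dual to $X$ as a $(0,1)$-tensor, the defining property of the Riemann curvature tensor in the moving-frame formalism fixed at the start of the section yields immediately
\[
X_{ijk}-X_{ikj}=X_{t}\RRR_{tijk},
\]
which is the standard Ricci identity on a covector field. This is the only genuinely ``input'' computation and simply records the convention for $\RRR_{tijk}$.

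Next, for \eqref{a:2}, I would take the covariant derivative with respect to the $l$-th frame of both sides of \eqref{a:1}. On the left this produces $X_{ijkl}-X_{ikjl}$. On the right, applying the Leibniz rule, the derivative splits into a term where $\nabla_l$ falls on $X_{t}$, giving $X_{tl}\RRR_{tijk}$, and a term where $\nabla_l$ falls on the curvature, giving $\RRR_{tijk,l}X_{t}$; summing these gives \eqref{a:2}. Finally, for \eqml{a:3}, I would apply the standard Ricci commutation identity for a $(0,2)$-tensor to $X_{ij}$, commuting its third and fourth covariant derivatives. The general formula
\[
T_{ij,kl}-T_{ij,lk}=T_{tj}\RRR_{tikl}+T_{it}\RRR_{tjkl},
\]
specialized to $T_{ij}=X_{ij}$, is precisely \eqref{a:3}.

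There is no essential obstacle here; the three relations are instances of (or direct consequences of) the Ricci identities. The only point requiring attention is bookkeeping: one must consistently use the index conventions for $\RRR_{tijk}$ adopted at the beginning of Section 2, in particular the position of the contracted index $t$, so that the curvature terms in \eqref{a:1}--\eqref{a:3} appear with the correct sign and index pattern, and so that the derivative $\RRR_{tijk,l}$ in \eqref{a:2} is unambiguously the $l$-th covariant derivative of the Riemann tensor.
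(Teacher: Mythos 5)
Your proposal is correct and follows exactly the paper's own route: \eqref{a:1} and \eqref{a:3} are the standard Ricci commutation identities for a covector and a $(0,2)$-tensor respectively, and \eqref{a:2} is obtained by covariantly differentiating \eqref{a:1} via the Leibniz rule. Nothing further is needed.
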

\begin{proof}
Equation \eqref{a:2} follows by taking the covariant derivative of \eqref{a:1}.

Equations \eqref{a:1} and \eqref{a:3} are the standard commutation formulas of covariant derivatives acting on tensors.
\end{proof}

Moreover we have the validity of the following general Ricci identities:
\begin{eqnarray}
&\label{a:4} \RRR_{ij,k}=\RRR_{ji,k};\\
&\label{a:5} \RRR_{ij,k}-\RRR_{ik,j}=-\RRR_{tijk,t}; \\
&\label{a:6} \RRR_{ij,kl}-\RRR_{ij,lk}=\RRR_{tikl}\RRR_{tj}+\RRR_{tjkl}\RRR_{it}; \\
&\label{a:7} \frac 12 \SSS_k = \RRR_{ki, i} = \RRR_{ik, i}.
\end{eqnarray}

Equation \eqref{a:4} is obvious from the symmetry of the Ricci tensor i.e. $\RRR_{ij}=\RRR_{ji}$.

Equation \eqref{a:5} follows from the second Bianchi's identities, indeed
\begin{eqnarray*}
\RRR_{ij,k}-\RRR_{ik,j}&=&\RRR_{titj,k}-\RRR_{titk,j}\\
&=&\RRR_{titj,k}+\RRR_{tikt,j}\\
&=&-\RRR_{tijk,t}.
\end{eqnarray*}
Equation \eqref{a:6} can be obtained using the definition of covariant derivative and the structure equations.

\noindent Equation \eqref{a:7} follows tracing \eqref{a:5}.

We are now ready to prove

\begin{lemma}\label{LemmaId} Let $(M,\left\langle \,,\,\right\rangle, X)$ be a soliton structure on the Riemannian manifold $(M, \metric)$. Then the following identities hold:
\begin{eqnarray}
&\label{eq1}\RRR_{ij} +\frac{1}{2}(X_{ij}+X_{ji})=\lambda \delta_{ij},\\
&\label{eq2}\SSS +  X_{ii}=m\lambda, \\
&\label{eq3}\SSS_j = -X_{iij}, \\
&\label{eq4}\RRR_{lj}X_{l}=-X_{jii},\\
&\label{eq5}\RRR_{ij,k}-\RRR_{ik,j}=-\frac{1}{2}\RRR_{lijk}X_{l}+\frac{1}{2}(X_{kij}-X_{jik}), \\
&\label{eq6}\RRR_{ij,k}-\RRR_{kj,i}=\frac{1}{2}\RRR_{ljki}X_{l}+\frac{1}{2}(X_{kji}-X_{ijk}).
\end{eqnarray}
\end{lemma}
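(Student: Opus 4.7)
The plan is to derive the six identities sequentially, working in a local orthonormal coframe in which $(\mathcal{L}_X\metric)_{ij} = X_{ij}+X_{ji}$, and using only the soliton equation \eqref{RicSolEq}, the commutation formulas of Lemma \ref{LemmaComm}, and the Bianchi--type identities \eqref{a:4}--\eqref{a:7}. The first three identities are essentially immediate: \eqref{eq1} is \eqref{RicSolEq} written in components, \eqref{eq2} is obtained by setting $i=j$ and summing, and \eqref{eq3} by covariantly differentiating \eqref{eq2} in the direction $e_j$.

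For \eqref{eq4}, I would differentiate \eqref{eq1} once to obtain
\[
R_{ij,k} + \tfrac12(X_{ijk}+X_{jik}) = 0,
\]
and then contract $k=i$. The contracted second Bianchi identity \eqref{a:7} rewrites $R_{ij,i}$ as $\tfrac12 S_j$, so the only remaining task is to put $X_{iji}$ in a usable form. Setting $k=i$ in the commutation rule \eqref{a:1} gives $X_{iji}-X_{iij}=X_t R_{tiji}$; the symmetries of the Riemann tensor identify the contraction $\sum_i R_{tiji}$ with the Ricci tensor $R_{tj}$, and \eqref{eq3} supplies $X_{iij}=-S_j$. Substituting, the $S_j$ contributions cancel and what remains is precisely \eqref{eq4}.

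For \eqref{eq5}, I would start again from $R_{ij,k}+\tfrac12(X_{ijk}+X_{jik})=0$, swap $j\leftrightarrow k$, and subtract, so that the difference $X_{ijk}-X_{ikj}$ is isolated and can be rewritten as $X_t R_{tijk}$ by \eqref{a:1}; a rearrangement then yields the claim. Finally, \eqref{eq6} follows from \eqref{eq5} by first using the Ricci symmetry $R_{ij}=R_{ji}$ to rewrite $R_{ij,k}-R_{kj,i}$ as $R_{ji,k}-R_{jk,i}$, then applying \eqref{eq5} with the relabelling $(i,j,k)\mapsto(j,i,k)$, and finally using the antisymmetry $R_{ljik}=-R_{ljki}$ to flip the sign in front of the curvature term. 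The only real obstacle is bookkeeping: correctly handling the index contractions that pass from $R_{tiji}$ to the Ricci tensor in \eqref{eq4} and keeping track of the sign in \eqref{eq6}; no analytic or geometric input beyond the symmetries of the Riemann tensor and the soliton equation is involved.
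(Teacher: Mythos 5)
Your proposal is correct and follows essentially the same route as the paper: \eqref{eq1}--\eqref{eq3} by tracing and differentiating the soliton equation, \eqref{eq4} by contracting the differentiated soliton equation and using \eqref{a:1}, \eqref{a:7} and \eqref{eq3} (the $\SSS_j$ terms cancelling exactly as you say), and \eqref{eq5} by antisymmetrizing in $j,k$ and invoking \eqref{a:1}. The only (harmless) deviation is that you obtain \eqref{eq6} from \eqref{eq5} by the relabelling $(i,j,k)\mapsto(j,i,k)$ together with $\RRR_{ij,k}=\RRR_{ji,k}$ and $\RRR_{ljik}=-\RRR_{ljki}$, whereas the paper redoes the short computation directly; both are valid and yield the same identity.
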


\begin{proof}
Equation \eqref{eq1} is a rewriting of \eqref{RicSolEq}. Equation \eqref{eq2} follows simply by tracing \eqref{eq1}.\\
To obtain \eqref{eq3} take the covariant derivative of \eqref{eq2}.\\
Next, taking the trace of the commutation formula \eqref{a:1} with respect to $i$ and $k$ we get
\[
\ X_{iji}-X_{iij}=\RRR_{liji}X_{l}=\RRR_{lj}X_{l}.
\]
Using this latter and equation \eqref{eq1} we compute
\begin{eqnarray*}
\RRR_{ij,i}&=&-\frac{1}{2}(X_{iji}+X_{jii})\\
&=&-\frac{1}{2}(X_{iji}-X_{iij}+X_{iij}+X_{jii})\\
&=&-\frac{1}{2}\RRR_{lj}X_{l}-\frac{1}{2}(X_{iij}+X_{jii}).
\end{eqnarray*}
It follows by \eqref{a:7} that
\[
\ \SSS_j=-\RRR_{lj}X_{l}-(X_{iij}+X_{jii})
\]
and using \eqref{eq3} we deduce \eqref{eq4}.\\
To prove \eqref{eq5} observe that taking the covariant derivative of \eqref{eq1} yields
\[
\RRR_{ij,k}+\frac{1}{2}(X_{ijk}+X_{jik})=0
\]
and
\[
\RRR_{ik,j}+\frac{1}{2}(X_{ikj}+X_{kij})=0.
\]
Taking the difference and using \eqref{eq1} we have
\begin{eqnarray*}
\RRR_{ij,k}-\RRR_{ik,j}&=&-\frac{1}{2}(X_{ijk}+X_{jik}-X_{ikj}-X_{kij})\\
&=&-\frac{1}{2}(X_{ijk}-X_{ikj})+\frac{1}{2}(X_{kij}-X_{jik})\\
&=&-\frac{1}{2}\RRR_{lijk}X_{l}+\frac{1}{2}(X_{kij}-X_{jik}),
\end{eqnarray*}
that is, \eqref{eq5}.

Similarly, using the commutation formula \eqref{a:1} we obtain
\begin{eqnarray*}
2(\RRR_{ij,k}-\RRR_{kj,i})&=&-(X_{ijk}+X_{jik}-X_{kji}-X_{jki})\\
&&=X_{kji}-X_{ijk}+X_{t}\RRR_{tjki},
\end{eqnarray*}
that is, \eqref{eq6}.
\end{proof}

\begin{lemma}\label{XlapRicS}
Let $(M,\left\langle \,,\,\right\rangle, X)$ be a soliton structure on the Riemannian manifold $(M, \metric)$,  then
\begin{eqnarray}
&\label{eq7}\Delta_X\RRR_{ik}=2\lambda\RRR_{ik}-2\RRR_{ijks}\RRR_{js}+\frac{1}{2}\RRR_{is}(X_{sk}-X_{ks})+\frac{1}{2}\RRR_{sk}(X_{si}-X_{is}),\\
&\label{eq8}\Delta_X\SSS=2\lambda \SSS-2|Ric|^2.
\end{eqnarray}
\end{lemma}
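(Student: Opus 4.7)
The plan is to differentiate the soliton identity \eqref{eq1} twice, take a trace, and then carefully commute covariant derivatives so that all the $X$-derivatives of order greater than two reduce, via \eqref{eq4} and the second Bianchi identity, to terms involving $\RRR_{abcd}$, $\nabla\Ric$, and the Hessian of $X$. The Hessian terms are then reinserted into the soliton equation to produce the final identity.

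First, from \eqref{eq1} one has $\RRR_{ik,j}=-\tfrac12(X_{ikj}+X_{kij})$, whence
\[
\Delta_X \RRR_{ik}=-\tfrac12(X_{ikjj}+X_{kijj})+\tfrac12 X_j(X_{ikj}+X_{kij}).
\]
The core technical step is to rewrite $X_{ikjj}$ in the form $X_{ijjk}+(\textrm{curvature corrections})$. Applying \eqref{a:2} with $l=j$ to swap the middle derivatives and then \eqref{a:3} to swap the outermost pair produces three types of correction term: $\RRR_{tikj}X_{tj}$, $\RRR_{tjkj}X_{it}=\RRR_{tk}X_{it}$, and $\RRR_{tikj,j}X_t$. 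The contracted second Bianchi identity \eqref{a:5} rewrites the last one as a linear combination of components of $\nabla\Ric$ contracted with $X$. Moreover, $X_{ijj}=-\RRR_{il}X_l$ by \eqref{eq4}, so $X_{ijjk}=-\RRR_{il,k}X_l-\RRR_{il}X_{lk}$. The analogous formula for $X_{kijj}$ is obtained by the swap $i\leftrightarrow k$.

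Summing, and invoking \eqref{eq1} once more to replace $X_{ikj}+X_{kij}$ by $-2\RRR_{ik,j}$, the terms in which $X_t$ appears undifferentiated combine into an expression that vanishes identically by a further application of the contracted Bianchi \eqref{a:5} (as they must, since the right-hand side of \eqref{eq7} contains no such terms). The surviving terms involve the Hessian $X_{ab}$, which I split into its symmetric part $\lambda\delta_{ab}-\RRR_{ab}$ (from \eqref{eq1}) and its antisymmetric part $\tfrac12(X_{ab}-X_{ba})$. The symmetric contributions, after a relabeling using only the antisymmetries of $\RRR_{abcd}$, produce precisely $2\lambda\RRR_{ik}-2\RRR_{ijks}\RRR_{js}$, while the antisymmetric contributions assemble into the remaining two terms of \eqref{eq7}.

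The main difficulty is the combinatorial bookkeeping: tracking signs arising from the antisymmetries of $\RRR_{abcd}$, identifying the correct Bianchi-contraction patterns, and verifying the cancellation of all undifferentiated-$X$ terms. Once \eqref{eq7} is established, equation \eqref{eq8} is immediate by tracing in $i=k$: $\RRR_{ii}=\SSS$, $\RRR_{ijis}\RRR_{js}=|\Ric|^2$, and the last two terms vanish because $\RRR_{is}(X_{si}-X_{is})$ is the full contraction of the symmetric tensor $\RRR_{is}$ with the antisymmetric one $X_{si}-X_{is}$.
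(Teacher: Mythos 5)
Your proposal is correct, and it reorganizes the computation in a way that is genuinely different from (and arguably cleaner than) the paper's proof. The paper starts from $\Delta\RRR_{ik}=\RRR_{ik,jj}$ and first commutes derivatives \emph{of the Ricci tensor}: it derives the commutation rule obtained by differentiating \eqref{eq5}, invokes the contracted form of \eqref{a:6}, passes through $\RRR_{jk,ji}=\tfrac12\SSS_{ki}$, and then isolates an auxiliary quantity $Z$ which is massaged using \eqref{eq3}, \eqref{a:2}, \eqref{a:3} and the derivative of \eqref{eq4}; only one of the two fourth-order $X$-terms, namely $X_{ikss}$, is reduced to $X_{issk}$ at the very end. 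You instead write $\Delta_X\RRR_{ik}=-\tfrac12(X_{ikjj}+X_{kijj})+\tfrac12 X_j(X_{ikj}+X_{kij})$ from the outset and treat \emph{both} fourth-order terms symmetrically by the same reduction $X_{ikjj}=X_{ijjk}+2\RRR_{tikj}X_{tj}+\RRR_{tikj,j}X_t+\RRR_{tk}X_{it}$, followed by $X_{ijjk}=-\RRR_{il,k}X_l-\RRR_{il}X_{lk}$ from \eqref{eq4}. This bypasses \eqref{eq3}, \eqref{eq5}, \eqref{a:6}, \eqref{a:7} and the quantity $Z$ entirely, and it makes the two claimed cancellations transparent: the undifferentiated-$X$ terms cancel because $\RRR_{tikj,j}+\RRR_{tkij,j}=(\RRR_{kt,i}-\RRR_{ki,t})+(\RRR_{it,k}-\RRR_{ik,t})$ by \eqref{a:5} and the pair symmetry, with the surviving $+\RRR_{ik,t}X_t$ pieces absorbed by the drift term $-X_j\RRR_{ik,j}$; and in the Hessian terms the combination $\RRR_{tikj}+\RRR_{tkij}$ is symmetric in $(t,j)$, so only the symmetric part $\lambda\delta_{tj}-\RRR_{tj}$ of $X_{tj}$ survives there, yielding $2\lambda\RRR_{ik}-2\RRR_{ijks}\RRR_{js}$, while the two explicitly antisymmetric terms of \eqref{eq7} come solely from $\tfrac12(\RRR_{il}X_{lk}+\RRR_{kl}X_{li})-\tfrac12(\RRR_{kl}X_{il}+\RRR_{il}X_{kl})$. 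I checked these steps and they close up exactly to \eqref{eq7}; the trace argument for \eqref{eq8} is the same as the paper's. The only caveat is that your write-up is a plan rather than a full computation, but every ingredient named is the right one and the bookkeeping does work out.
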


\begin{proof}
To prove Equation \eqref{eq7} first observe that by \eqref{eq5}
\[
\ \RRR_{ki,j}-\RRR_{kj,i}=\frac{1}{2}\RRR_{lkji}X_{l}+\frac{1}{2}(X_{jki}-X_{ikj}),
\]
and thus taking the covariant derivative we obtain the commutation relations
\begin{equation}\label{comm2ric} \RRR_{ik,jt}-\RRR_{jk,it}=\frac{1}{2}(\RRR_{ijkl,t}X_{l}+\RRR_{ijkl}X_{lt})+\frac{1}{2}(X_{jkit}-X_{ikjt}).
\end{equation}
Moreover, contracting the commutation relations \eqref{a:6} for the second covariant derivative of $R_{jk}$ \begin{eqnarray}\label{25'}
\RRR_{jk,ij}&=&\RRR_{jk,ji}+\RRR_{sjij}\RRR_{sk}+\RRR_{skij}\RRR_{js}\\
\nonumber &=&\RRR_{jk,ji}+\RRR_{si}\RRR_{sk}+\RRR_{skij}\RRR_{js}.
\end{eqnarray}
We now use \eqref{comm2ric} to obtain
\begin{equation*}
\Delta\RRR_{ik}=\RRR_{ik,jj}=\RRR_{jk,ij}+\frac{1}{2}(\RRR_{ijkl,j}X_{l}+\RRR_{ijkl}X_{lj})+\frac{1}{2}(X_{jkij}-X_{ikjj}).
\end{equation*}
On the other hand, from the second Bianchi identities
\begin{eqnarray*}
\RRR_{ijkl,j}X_{l}&=&-\RRR_{ijlj,k}X_{l}-\RRR_{ijjk,l}X_{l}\\
&=&-\RRR_{il,k}X_{l}+\RRR_{ik,l}X_{l},
\end{eqnarray*}
and inserting into the above, with the aid of \eqref{25'}, yields
\begin{eqnarray*}
\Delta\RRR_{ik}&=&\RRR_{jk,ij}+\frac{1}{2}(\RRR_{ik,l}-\RRR_{il,k})X_{l}+\frac{1}{2}\RRR_{ijkl}X_{lj}+\frac{1}{2}(X_{jkij}-X_{ikjj})\\
&=&\RRR_{jk,ji}+\RRR_{si}\RRR_{sk}+\RRR_{skij}\RRR_{js}+\frac{1}{2}(\RRR_{ik,l}-\RRR_{il,k})X_{l}\\
&&+\frac{1}{2}\RRR_{ijkl}X_{lj}+\frac{1}{2}(X_{jkij}-X_{ikjj}).
\end{eqnarray*}
Hence, from \eqref{a:7} and \eqref{eq1}
\begin{eqnarray*}
\Delta\RRR_{ik}&=&\frac{1}{2}\SSS_{ki}+\RRR_{si}\RRR_{sk}+\RRR_{skij}\RRR_{sj}-\frac{1}{2}\RRR_{skij}X_{sj}\\
&&+\frac{1}{2}(\RRR_{ik,l}-\RRR_{il,k})X_{l}+\frac{1}{2}(X_{jkij}-X_{ikjj})\\
&=&\frac{1}{2}\SSS_{ki}+\RRR_{si}\RRR_{sk}+2\RRR_{skij}\RRR_{sj}-\RRR_{skij}(-\RRR_{sj}+\lambda \delta_{sj}-\frac{1}{2}X_{js})\\
&&+\frac{1}{2}(\RRR_{ik,l}-\RRR_{il,k})X_{l}+\frac{1}{2}(X_{jkij}-X_{ikjj})\\
&=&\frac{1}{2}\SSS_{ki}+\RRR_{si}\RRR_{sk}+2\RRR_{skij}\RRR_{sj}+\lambda\RRR_{ik}+\frac{1}{2}\RRR_{skij}X_{js}+\frac{1}{2}(\RRR_{ik,s}-\RRR_{is,k})X_{s}\\
&&+\frac{1}{2}(X_{jkij}-X_{ikjj}).
\end{eqnarray*}
We shall now deal with the sum
\begin{equation}\label{Z}
Z=\frac{1}{2}\SSS_{ik}+\RRR_{sk}\RRR_{si}-\frac{1}{2}\RRR_{is,k}X_{s}+\frac{1}{2}X_{skis}.
\end{equation}
Towards this aim we first observe that taking the covariant derivative of \eqref{eq3} gives
\begin{equation*}
\frac{1}{2}\SSS_{ik}=-\frac{1}{2}X_{ssik}.
\end{equation*}
Using this information in \eqref{Z} we obtain
\begin{eqnarray*}
Z&=&-\frac{1}{2}X_{ssik}+\RRR_{sk}\RRR_{si}-\frac{1}{2}\RRR_{is,k}X_{s}+\frac{1}{2}X_{skis}\\
&=&\frac{1}{2}(X_{skis}-X_{ssik})+\RRR_{sk}\RRR_{si}-\frac{1}{2}\RRR_{is,k}X_{s} \\
&=& \frac{1}{2}(X_{skis}-X_{sksi}+X_{sksi}-X_{sski})+\RRR_{sk}\RRR_{si}-\frac{1}{2}\RRR_{is,k}X_{s}\\
&=& \frac{1}{2}(\RRR_{tsis}X_{tk}+\RRR_{tkis}X_{st}+\RRR_{tsks}X_{ti}+\RRR_{tsks,i}X_{t})+\RRR_{sk}\RRR_{si}-\frac{1}{2}\RRR_{is,k}X_{s}\\
&=& \frac{1}{2}(\RRR_{ti}X_{tk}+\RRR_{tkis}X_{st}+\RRR_{tk}X_{ti}+\RRR_{tk,i}X_{t})+\RRR_{sk}\RRR_{si}-\frac{1}{2}\RRR_{is,k}X_{s}\\
&=& \frac{1}{2}(\RRR_{ti}X_{tk}+\RRR_{tkis}X_{st}+\RRR_{tk}X_{ti})+\frac{1}{2}(\RRR_{sk,i}-\RRR_{si,k})X_{s}+\RRR_{sk}\RRR_{si}\\
&=& \frac{1}{2}(\RRR_{si}X_{sk}+\RRR_{tkis}X_{st}+\RRR_{sk}X_{si})-\frac{1}{2}\RRR_{tski,t}X_{s}+\RRR_{sk}\RRR_{si}\\
&=& \RRR_{si}(\RRR_{sk}+\frac{1}{2}X_{sk})+\frac{1}{2}(\RRR_{tkis}X_{st}+\RRR_{sk}X_{si})-\frac{1}{2}\RRR_{tski,t}X_{s}\\
&=& \RRR_{si}(-\frac{1}{2}X_{sk}+\lambda \delta_{sk})+\frac{1}{2}(\RRR_{tkis}X_{st}+\RRR_{sk}X_{si})-\frac{1}{2}\RRR_{tski,t}X_{s}\\
&=& -\frac{1}{2}\RRR_{si}X_{ks}+\lambda\RRR_{ik}+\frac{1}{2}\RRR_{tkis}X_{st}+\frac{1}{2}\RRR_{sk}X_{si}-\frac{1}{2}\RRR_{tski,t}X_{s}.
\end{eqnarray*}
Substituting the above into the expression for $\Delta \RRR_{ik}$ we deduce
\begin{eqnarray*}
\Delta\RRR_{ik}&=&-\frac{1}{2}\RRR_{si}X_{ks}+2\lambda\RRR_{ik}+\frac{1}{2}\RRR_{tkis}X_{st}+\frac{1}{2}\RRR_{sk}X_{si}-\frac{1}{2}\RRR_{tski,t}X_{s}\\&&+2\RRR_{skij}\RRR_{sj}+\frac{1}{2}\RRR_{skij}X_{js}+\frac{1}{2}\RRR_{ik,s}X_{s}-\frac{1}{2}X_{ikss}.
\end{eqnarray*}
Now we note that by \eqref{a:2} and \eqref{a:3},
\begin{eqnarray*}
X_{ikss}-X_{issk}&=&X_{ikss}-X_{isks}+X_{isks}-X_{issk}\\
&=&\RRR_{tiks}X_{ts}+\RRR_{tiks,s}X_{t}+\RRR_{tiks}X_{ts}+\RRR_{tsks}X_{it},
\end{eqnarray*}
that is,
\begin{equation*}
X_{ikss}=X_{issk}+\RRR_{tiks}X_{ts}+\RRR_{tiks,s}X_{t}+\RRR_{tiks}X_{ts}+\RRR_{tk}X_{it}.
\end{equation*}
Moreover we know, taking the covariant derivative of \eqref{eq4}, that
\begin{equation*}
X_{issk}=-\RRR_{ti,k}X_{t}-\RRR_{ti}X_{tk}.
\end{equation*}
Thus,
\begin{equation*}
X_{ikss}=-\RRR_{ti,k}X_{t}-\RRR_{ti}X_{tk}+\RRR_{tiks}X_{ts}+\RRR_{tiks,s}X_{t}+\RRR_{tiks}X_{ts}+\RRR_{tk}X_{it},
\end{equation*}
and substituting above, using the first Bianchi identities and \eqref{a:5},
\begin{eqnarray*}
\Delta\RRR_{ik}&=& 2\lambda\RRR_{ik}-2\RRR_{ijks}\RRR_{js}-\frac{1}{2}\RRR_{si}X_{ks}+\frac{1}{2}\RRR_{tkis}X_{st}\\
&&+\frac{1}{2}\RRR_{sk}X_{si}-\frac{1}{2}\RRR_{tski,t}X_{s}+\frac{1}{2}\RRR_{skij}X_{js}+\frac{1}{2}\RRR_{ik,s}X_{s}\\
&&-\frac{1}{2}(-\RRR_{it,k}X_{t}-\RRR_{it}X_{tk}+\RRR_{tiks}X_{ts}+\RRR_{tiks,s}X_{t}+\RRR_{tiks}X_{ts}+\RRR_{tk}X_{it})\\
&=& 2\lambda \RRR_{ik}-2\RRR_{ijks}\RRR_{js}-\frac{1}{2}\RRR_{is}X_{ks}+\frac{1}{2}\RRR_{tkis}X_{st}+\frac{1}{2}\RRR_{sk}X_{si}-\frac{1}{2}\RRR_{tski,t}X_{s} \\
&&+ \frac{1}{2}\RRR_{skij}X_{js}+\frac{1}{2}\RRR_{ik,s}X_{s}+\frac{1}{2}\RRR_{is,k}X_{s}+\frac{1}{2}\RRR_{is}X_{sk}\\
&&-\frac{1}{2}\RRR_{tiks}X_{ts}-\frac{1}{2}\RRR_{tiks,s}X_{t}-\frac{1}{2}\RRR_{tiks}X_{ts}-\frac{1}{2}\RRR_{sk}X_{is}\\
&=&2\lambda\RRR_{ik}-2\RRR_{ijks}\RRR_{js}+\RRR_{ik,s}X_{s}+\frac{1}{2}\RRR_{is}(X_{sk}-X_{ks})+\frac{1}{2}\RRR_{sk}(X_{si}-X_{is})\\
&&+\RRR_{tkis}X_{st}-\RRR_{tiks}X_{ts}+\frac{1}{2}\RRR_{tisk,t}-\frac{1}{2}\RRR_{tisk,t}X_{s},
\end{eqnarray*}
implying \eqref{eq7}. We conclude observing that \eqref{eq8} is easily obtained by tracing \eqref{eq7}.
\end{proof}

Our next step is to compute the $X$-Laplacian of the square norm of the traceless Ricci tensor $T=\Ric -\frac{\SSS}{m}$ using Lemma \ref{XlapRicS}.
 Before stating the lemma we recall the splitting of the Riemann curvature tensor:

\begin{align}\label{27}
\RRR_{ijks}  &  =\operatorname{W}_{ijks} +\frac{1}{m-2}\pa{\RRR_{ik}\delta_{j s}-\RRR_{is}
\delta_{jk}+\RRR_{js}\delta_{ik}-\RRR_{jk}\delta_{is}} \\ \nonumber
& -   \frac{S}{(m-1)(m-2)}\pa{\delta_{ik}\delta_{js}-\delta_{is}\delta_{jk}},
\end{align}
where $\WWW_{ijks}$ are the components of the Weyl curvature tensor.
\begin{lemma}\label{XlapT}
Let $(M, \left\langle \,,\,\right\rangle, X)$ be a Ricci soliton structure on the Riemannian manifold $(M, \metric)$. Then
\begin{equation}\label{XlapT1}
\frac{1}{2}\Delta_X|T|^2=\abs{\nabla T}^2+2\pa{\lambda-\frac{m-2}{m(m-1)}\SSS}\abs{\TTT}^2+\frac{4}{m-2}Tr(\TTT^3)-2\TTT_{ik}\TTT_{sj}\WWW_{ksij}.
\end{equation}
\end{lemma}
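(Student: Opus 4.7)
The plan is to apply the standard identity $\tfrac{1}{2}\Delta_X |T|^2 = |\nabla T|^2 + T_{ik}\Delta_X T_{ik}$ (this follows from $\tfrac{1}{2}\Delta|T|^2=|\nabla T|^2+\langle T,\Delta T\rangle$ and the Leibniz rule for $\langle X,\nabla|T|^2\rangle$) and then evaluate the term $T_{ik}\Delta_X T_{ik}$ explicitly. Writing $T_{ik}=\RRR_{ik}-\tfrac{\SSS}{m}\delta_{ik}$ and using Lemma \ref{XlapRicS}, the trace-free property $T_{ii}=0$ immediately eliminates the contribution coming from $\Delta_X \SSS$. So I will be left with
\[
T_{ik}\Delta_X T_{ik}=2\lambda T_{ik}\RRR_{ik}-2T_{ik}\RRR_{ijks}\RRR_{js}+T_{ik}\!\left[\tfrac{1}{2}\RRR_{is}(X_{sk}-X_{ks})+\tfrac{1}{2}\RRR_{sk}(X_{si}-X_{is})\right].
\]
Since $T_{ik}\RRR_{ik}=|T|^2+\tfrac{\SSS}{m}T_{ii}=|T|^2$, the first term already has the desired form.

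Next I would dispose of the skew-symmetric $\nabla X$ terms. Setting $A_{sk}=\tfrac{1}{2}(X_{sk}-X_{ks})$ (antisymmetric), the bracket equals $T_{ik}(\RRR_{is}A_{sk}+\RRR_{sk}A_{si})$; after a relabel using the symmetry $T_{ik}=T_{ki}$ and $\RRR$ symmetric, this becomes $2T_{ik}\RRR_{is}A_{sk}$. Substituting $\RRR_{is}=T_{is}+\tfrac{\SSS}{m}\delta_{is}$ gives $2T_{ik}T_{is}A_{sk}+\tfrac{2\SSS}{m}T_{ik}A_{ik}$; the second piece vanishes because $T$ is symmetric and $A$ antisymmetric, and the first piece vanishes by the same argument applied to the three-tensor $T_{ik}T_{is}$ symmetric in $(s,k)$. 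Hence these terms contribute nothing.

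The remaining work is purely algebraic: substitute the Weyl decomposition \eqref{27} for $\RRR_{ijks}$ into $-2T_{ik}\RRR_{ijks}\RRR_{js}$ and expand. The trace-freeness of $W$ reduces the Weyl part to $-2T_{ik}T_{js}\WWW_{ijks}$ (after writing $\RRR_{js}=T_{js}+\tfrac{\SSS}{m}\delta_{js}$, with the $\tfrac{\SSS}{m}$ contribution killed by $W_{ijks}\delta_{js}=0$), which matches the target term via the pair symmetry $\WWW_{ijks}=\WWW_{ksij}$. The Ricci parts of the decomposition yield, upon further substitution of $\RRR=T+\tfrac{\SSS}{m}g$ and using $T_{ii}=0$, the combination $\tfrac{1}{m-2}(\SSS|T|^2-2T_{ik}\RRR_{is}\RRR_{sk})$ in which $T_{ik}\RRR_{is}\RRR_{sk}=\operatorname{Tr}(T^3)+\tfrac{2\SSS}{m}|T|^2$. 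The scalar piece contributes $\tfrac{\SSS}{(m-1)(m-2)}|T|^2$. Combining the $\SSS|T|^2$ coefficients,
\[
\frac{m-4}{m(m-2)}+\frac{1}{(m-1)(m-2)}=\frac{(m-4)(m-1)+m}{m(m-1)(m-2)}=\frac{(m-2)^2}{m(m-1)(m-2)}=\frac{m-2}{m(m-1)},
\]
and the $\operatorname{Tr}(T^3)$ pieces give $\tfrac{4}{m-2}\operatorname{Tr}(T^3)$, yielding \eqref{XlapT1}.

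The main obstacle is not conceptual but bookkeeping: one has to track a sizeable number of index contractions, and the key observations that keep things under control are (a) every term involving an undifferentiated trace ($T_{ii}$ or a trace of $W$) drops out, (b) the antisymmetric part of $\nabla X$ decouples from $T$ by symmetry, and (c) after isolating the Weyl piece the remaining Ricci/scalar contributions reorganize, via the simple algebraic identity above, into precisely the coefficient $\tfrac{m-2}{m(m-1)}$ appearing in the statement.
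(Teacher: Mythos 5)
Your proposal is correct and follows essentially the same route as the paper: the key inputs in both cases are formula \eqref{eq7} of Lemma \ref{XlapRicS}, the cancellation of the $\nabla X$ contributions by the symmetric--antisymmetric contraction argument, and the Weyl splitting \eqref{27}. The only difference is organizational — you compute $T_{ik}\Delta_X T_{ik}$ directly with $\Ric=T+\frac{\SSS}{m}\left\langle \,,\,\right\rangle$ substituted throughout, whereas the paper first computes $\Delta_X|\Ric|^2$ and $\Delta_X\SSS^2$ separately and combines them via $|T|^2=|\Ric|^2-\SSS^2/m$; your coefficient bookkeeping for $\SSS|T|^2$ and $\operatorname{Tr}(T^3)$ checks out.
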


\begin{proof}
From the definition of $\TTT$ we have
\begin{eqnarray}
\label{T1}|\TTT|^2&=&|\Ric|^2-\frac{S^2}{m},\\
\label{T2}|\nabla \TTT|^2&=&|\nabla\Ric|^2-\frac{|\nabla S|^2}{m}.
\end{eqnarray}
Using \eqref{eq7} we deduce
\begin{eqnarray}
\label{XlapnormRic}\Delta_X|\Ric|^2&=&2(\Delta\RRR_{ik})\RRR_{ik}+2|\nabla\Ric|^2-2\RRR_{ik}\RRR_{ik,s}X_s\\
&=&2\RRR_{ik}\Delta_X\RRR_{ik}+2|\nabla\Ric|^2\nonumber\\
&=&4\lambda|\Ric|^2-4\RRR_{ijks}\RRR_{ik}\RRR_{js}+\RRR_{is}\RRR_{ik}(X_{sk}-X_{ks})\nonumber\\&&+\RRR_{sk}\RRR_{ik}(X_{si}-X_{is})+2|\nabla\Ric|^2\nonumber\\
&=&4\lambda|\Ric|^2-4\RRR_{ijks}\RRR_{ik}\RRR_{js}+2|\nabla \Ric|^2,\nonumber
\end{eqnarray}
where in the last inequality we have used the fact that
\[
\ \RRR_{is}\RRR_{ik}(X_{sk}-X_{ks})=\RRR_{sk}\RRR_{ik}(X_{si}-X_{is})=0.
\]
Indeed, both quantities are products of a term  symmetric in $s$, $k$ with another antisymmetric in $s$, $k$. Moreover, using \eqref{eq8},
\begin{eqnarray}
\label{XlapS2}\Delta_X\SSS^2&=&2|\nabla \SSS|^2 +2S\Delta_X \SSS\\
&=&2|\nabla \SSS|^2+4\lambda \SSS^2-4\SSS|\Ric|^2.\nonumber
\end{eqnarray}
Hence, by \eqref{T1}, \eqref{T2} \eqref{XlapnormRic} and \eqref{XlapS2} we compute
\begin{eqnarray*}
\frac{1}{2}\Delta_X|\TTT|^2&=&\frac{1}{2}\Delta_X|\Ric|^2-\frac{1}{2m}\Delta_X S^2\\
&=&2\lambda |\Ric|^2-2\RRR_{ijks}\RRR_{ik}\RRR_{js}+|\nabla \Ric|^2-\frac{1}{m}|\nabla \SSS|^2-\frac{2}{m}\lambda \SSS^2+\frac{2}{m} \SSS|\Ric|^2\\
&=&2\lambda |\TTT|^2-2\RRR_{ijks}\RRR_{ik}\RRR_{js}+\frac{2}{m}\SSS|\Ric|^2+|\nabla \TTT|^2.
\end{eqnarray*}
Observe now that, using the splitting of the curvature tensor \eqref{27},
\begin{eqnarray}
\RRR_{ijks}\RRR_{ik}\RRR_{js} &=& \WWW_{ijks}\RRR_{ik}\RRR_{js}+\frac{(2m-1)}{(m-1)(m-2)}\SSS|\Ric|^2\label{Comp1}\\
&&-\frac{2}{m-2}\mathrm{Tr}(\Ric^3)-\frac{\SSS^3}{(m-1)(m-2)}.\nonumber
\end{eqnarray}
Moreover, since all the traces of the Weyl tensor vanish we have that
\begin{equation}\label{Comp2}
\WWW_{ijks}\RRR_{ik}\RRR_{js}=\WWW_{ijks}\TTT_{ik}\TTT_{js}.
\end{equation}
Using \eqref{Comp1} and \eqref{Comp2} in the above computation we obtain \eqref{XlapT1}.
\end{proof}

\section{Proof of the results}
To prove our main results we first need to introduce some auxiliary analytical lemmas. We adapt the proof of Theorem 1.9 of \cite{prsMemoirs} to show the validity of the next
\begin{lemma}\label{FullOmoriYau}
Assume on the Riemannian manifold $(M, \left\langle \,,\,\right\rangle)$ the existence of a nonnegative $C^2$ function $\gamma$ satisfying the following conditions

\begin{eqnarray}
\label{hp2.2} &\gamma\left(x\right)\rightarrow +\infty \textrm{\,\,\,as\,\,\,} r(x)\rightarrow\infty,\\
\label{hp2.3} &\exists A>0 \textrm{\,\,\,such that\,\,\,} \left|\nabla \gamma\right|\leq A\gamma^{\frac{1}{2}}\textrm{\,\,\, off a compact set,}\\
\label{hp2.4}&\exists B>0 \textrm{\,\,\,such that\,\,\,} \Delta_{X}\gamma\leq B\gamma^{\frac{1}{2}}G\left(\gamma^{\frac{1}{2}}\right)^{\frac{1}{2}} \textrm{\,\,\,off a compact set,}
\end{eqnarray}
with $X\in \mathfrak{X}(M)$, $G$ as in \eqref{hpG}, $A, B$ positive constants.
Then, given any function $u\in C^{2}\left(M\right)$ with $u^{*}=\sup_{M}u<+\infty$, there exists a sequence $\left\{z_{k}\right\}_{k}\subset M$ such that
\begin{equation}\label{O-YMaxPr}
\begin{array}{llll}
&\left(i\right)\,u\left(z_{k}\right)>u^{*}-\frac{1}{k},&\left(ii\right)\,\left|\nabla u\left(z_{k}\right)\right|<\frac{1}{k},&\left(iii\right)\,\Delta_{X}u\left(z_{k}\right)<\frac{1}{k},
\end{array}
\end{equation}
for each $k\in\mathbb{N}$, i.e. the full Omori-Yau maximum principle for $\Delta_{X}$
holds on $\left(M,\left\langle \,,\,\right\rangle\right)$.
\end{lemma}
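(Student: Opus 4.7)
The plan is to follow the classical Pigola–Rigoli–Setti construction from Theorem 1.9 of \cite{prsMemoirs}, adapted to the operator $\Delta_X$. Given $u\in C^2(M)$ with $u^*<+\infty$, I would introduce a one-parameter family of ``corrected'' functions $u_\eta=u-\eta\,\phi(\gamma)$ for $\eta>0$, where $\phi:[0,+\infty)\to[0,+\infty)$ is a carefully chosen smooth increasing function built from $G$. The natural choice is
\[
\phi(t)=\int_0^t\frac{ds}{\sqrt{G(s)}},
\]
which satisfies $\phi'(t)=1/\sqrt{G(t)}$, and, crucially, $\phi''(t)=-G'(t)/(2G(t)^{3/2})\le0$ by hypothesis \eqref{hpG}(ii); moreover $\phi(t)\to+\infty$ as $t\to+\infty$ by hypothesis \eqref{hpG}(iii). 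Together with \eqref{hp2.2}, this forces $u_\eta(x)\to-\infty$ as $r(x)\to+\infty$, so $u_\eta$ attains a global maximum at some point $x_\eta\in M$.

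At $x_\eta$ one has $\nabla u(x_\eta)=\eta\,\phi'(\gamma)\,\nabla\gamma$ and $\Delta_X u(x_\eta)\le\eta\bigl[\phi''(\gamma)|\nabla\gamma|^2+\phi'(\gamma)\Delta_X\gamma\bigr]$. Since $\phi''\le0$, the first term is discarded, leaving
\[
\Delta_X u(x_\eta)\le\eta\,\phi'(\gamma(x_\eta))\,\Delta_X\gamma(x_\eta)\le\eta B\,\frac{\gamma(x_\eta)^{1/2}\,\sqrt{G(\gamma(x_\eta)^{1/2})}}{\sqrt{G(\gamma(x_\eta))}}
\]
whenever $x_\eta$ lies outside the compact set where \eqref{hp2.4} applies. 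Using \eqref{hp2.3} one gets analogously $|\nabla u(x_\eta)|\le\eta A\sqrt{\gamma(x_\eta)/G(\gamma(x_\eta))}$. For $x_\eta$ in the bad compact set, both quantities are trivially controlled by $\eta$ times a constant depending only on $\gamma$, $|\nabla\gamma|$, $\Delta_X\gamma$, and $G$ on that compactum.

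The decisive step is invoking hypothesis \eqref{hpG}(iv): setting $t=\gamma(x_\eta)$ and using $tG(t^{1/2})/G(t)\le C$ for $t$ large, the ratio $\gamma^{1/2}\sqrt{G(\gamma^{1/2})}/\sqrt{G(\gamma)}$ is bounded by $\sqrt{C}$; the same inequality in the form $G(t)\ge c\,tG(0)$ (obtained by iterating \eqref{hpG}(iv)) shows that $\gamma/G(\gamma)$ is bounded as well. Hence there exists a constant $C_0>0$, independent of $\eta$, such that $|\nabla u(x_\eta)|\le C_0\eta$ and $\Delta_X u(x_\eta)\le C_0\eta$. It remains to verify that $u(x_\eta)\to u^*$: for any $\delta>0$ pick $y_\delta\in M$ with $u(y_\delta)>u^*-\delta/2$; by maximality of $x_\eta$,
\[
u(x_\eta)\ge u_\eta(y_\delta)+\eta\phi(\gamma(x_\eta))\ge u^*-\tfrac{\delta}{2}-\eta\,\phi(\gamma(y_\delta)),
\]
which exceeds $u^*-\delta$ for $\eta$ sufficiently small. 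Choosing, for each $k\in\mathbb{N}$, $\eta_k>0$ small enough that all three estimates lie below $1/k$ and setting $z_k=x_{\eta_k}$ completes the argument.

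The main technical point — and the one I expect to be most delicate to write carefully — is the uniform control of $\phi'(\gamma)\Delta_X\gamma$: condition \eqref{hpG}(iv) must be shown to transfer into the bound $\gamma^{1/2}\sqrt{G(\gamma^{1/2})}\le\sqrt{C\,G(\gamma)}$ in a way that absorbs the unknown location of $x_\eta$ along the sequence $\eta\to0$. Minor bookkeeping is also required to patch the estimates valid only ``off a compact set'' in \eqref{hp2.3}–\eqref{hp2.4} with continuity of the relevant quantities on that compactum, but this is routine.
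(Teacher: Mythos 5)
Your argument is correct, and the analytic core is identical to the paper's: all three hypotheses on $G$ are used in exactly the same places (monotonicity to get $\phi''\le 0$, non-integrability of $G^{-1/2}$ to force the penalized function to have a genuine maximum, and \eqref{hpG}(iv) to bound $\phi'(\gamma)\,\Delta_X\gamma$ and, together with $G\ge G(0)>0$, the term $\phi'(\gamma)\,|\nabla\gamma|$). The only difference is in the packaging: the paper follows the quotient form of Theorem 1.9 in \cite{prsMemoirs}, maximizing $f_k=(u-u(p)+1)/\psi(\gamma)^{1/k}$ with $\psi=e^{\phi}$, whereas you maximize the additive penalization $u-\eta\,\phi(\gamma)$. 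Since $\log f_k=\log(u-u(p)+1)-\tfrac1k\phi(\gamma)$, the two are variants of the same penalization scheme; your version is slightly cleaner in that it avoids carrying the harmless factor $u(x_k)-u(p)+1$ through the estimates and treats the dichotomy ``$x_\eta$ escapes to infinity'' versus ``$x_\eta$ stays in a compact set'' uniformly (the paper handles the second case separately by extracting a convergent subsequence and taking $z_k=\bar x$). Two small points worth writing out explicitly in a final version: the chain rule $\Delta_X(\phi\circ\gamma)=\phi'(\gamma)\Delta_X\gamma+\phi''(\gamma)|\nabla\gamma|^2$ (formula \eqref{2.12} of the paper), and the observation that at an interior maximum $\nabla u_\eta=0$ forces $\Delta_X u_\eta=\Delta u_\eta\le 0$, so the drift term costs nothing — this is precisely why the operator $\Delta_X$ admits this maximum-principle argument at all.
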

\begin{remark}\rm{
  For $X\equiv 0$ we recover Theorem 1.9 of \cite{prsMemoirs}.}
\end{remark}
\begin{proof}
We define the function
\[
\ \psi(t)=\exp{\int_{0}^{t}\frac{ds}{\sqrt{G(s)}}}
\]
and we note that, by the properties of $G$, $\psi$ is well defined, smooth, positive ad it satisfies
\begin{eqnarray}
\label{2.5} &\psi(t)\to+\infty \quad\textrm{as}\quad t\to+\infty,\\
\label{2.6} &0<\frac{\psi^\prime(t)}{\psi(t)}\leq \frac{C}{t^{\frac{1}{2}}\sqrt{G(t^{\frac{1}{2}})}},
\end{eqnarray}
for some constant $C>0$. Next, we fix a point $p\in M$ and, for each $k\in \mathbb{N}$ and $u$ as in the statement of the lemma, we define
\begin{equation}\label{2.7}
f_{k}(x)=\frac{u(x)-u(p)+1}{\psi(\gamma(x))^{\frac{1}{k}}}.
\end{equation}
Then, $f_{k}(p)>0$. From $u^{*}<+\infty$ and $\psi(\gamma(x))\to+\infty$ as $r(x)\to+\infty$ we deduce that $\limsup_{r(x)\to+\infty}f_{k}(x)\leq 0$.
Thus $f_{k}$ attains a positive absolute maximum at some $x_k\in M$. In this way we produce a sequence $\left\{x_{k}\right\}$. Proceeding as in the proof of Theorem 1.9 of \cite{prsMemoirs}, (up to passing to a subsequence if needed)
\begin{equation}\label{2.8}
u(x_{k})\to u^{*}\quad\textrm{as}\quad k\to+\infty.
\end{equation}
If $\left\{x_{k}\right\}$ remains in a compact set, then $x_{k}\to\bar{x}\in M$ as $k\to+\infty$ (up to passing to a subsequence) and $u$ attains its absolute maximum.\\
At $\bar{x}$ we have
\begin{equation}
\begin{array}{llll}
&u\left(\bar{x}\right)=u^{*};&\left|\nabla u\left(\bar{x}\right)\right|=0;&\Delta_{X}u\left(\bar{x}\right)=\Delta u(\bar{x})-\left\langle X,\nabla u\right\rangle(\bar{x})\leq 0.
\end{array}
\end{equation}
In this case we let $z_{k}=\bar{x}$ for any $k\in\mathbb{N}$. We now consider the case $x_{k}\to\infty$ so that $\gamma(x_{k})\to+\infty$. Since $f_{k}$ attains a positive maximum at $x_{k}$ we have
\begin{equation}\label{2.9}
\begin{array}{lll}
&(i)\,\nabla \log f_k(x_{k})=0;&(ii)\,\Delta_{X}\log f_{k}(x_{k})\leq 0.
\end{array}
\end{equation}
From \eqref{2.9} (i), since
\[
\ \nabla(\log f_{k})=\nabla \left(\log\left(\frac{u(x)-u(p)+1}{\psi(\gamma(x))^{\frac{1}{k}}}\right)\right)=\frac{\nabla u(x)}{u(x)-u(p)+1}-\frac{1}{k}\frac{\psi^{\prime}(\gamma(x))}{\psi(\gamma(x))}\nabla \gamma (x),
\]
we have
\[
\ 0=\nabla(\log f_{k})(x_{k})=\frac{\nabla u (x_{k})}{u(x)-u(p)+1}-\frac{1}{k}\frac{\psi^{\prime}(\gamma(x_{k}))}{\psi(\gamma(x_{k}))}\nabla \gamma(x_{k}),
\]
that is,
\begin{equation}\label{2.9a}
\nabla u(x_{k})=\frac{1}{k}\frac{\psi^{\prime}(\gamma(x_{k}))}{\psi(\gamma(x_{k}))}\nabla \gamma (x_{k})(u(x_{k})-u(p)+1).
\end{equation}
Reasoning as in \cite{prsMemoirs}, page 9, we deduce
\begin{equation}\label{2.9b}
\Delta u(x_{k})\leq \frac{u(x_{k}-u(p)+1)}{k}\left\{\frac{\psi^{\prime}(\gamma(x_{k}))}{\psi(\gamma(x_{k}))}\Delta \gamma (x_{k})+\frac{1}{k}\left[\frac{\psi^{\prime}(\gamma(x_{k}))}{\psi(\gamma(x_{k}))}\right]^2|\nabla \gamma(x_{k})|^2\right\}
\end{equation}
A computation using \eqref{2.9} (i), \eqref{hp2.3} and \eqref{2.6} shows that
\begin{equation}\label{2.10}
|\nabla u|(x_{k})\leq \frac{a}{k}\frac{u(x_{k})-u(p)+1}{\sqrt{G(\gamma^{\frac{1}{2}})}}
\end{equation}
Therefore, using \eqref{2.9a}, \eqref{2.9b}, \eqref{2.6} and the assumptions \eqref{hp2.3} and \eqref{hp2.4} we obtain
\begin{align}\label{2.11}
\Delta_X u(x_{k})=&\Delta u(x_{k})-\left\langle X,\nabla u\right\rangle(x_{k})\\
\leq&\frac{u(x_{k})-u(p)+1}{k}\left\{\frac{\psi^{\prime}(\gamma(x_{k}))}{\psi(\gamma(x_{k}))}\Delta\gamma(x_{k})+\frac{1}{k}\left[\frac{\psi^{\prime}(\gamma(x_{k}))}{\psi(\gamma(x_{k}))}\right]^2|\nabla \gamma(x_{k})|^2\right\}\nonumber\\
&-\frac{1}{k}(u(x_{k})-u(p)+1)\frac{\psi^{\prime}(\gamma(x_{k}))}{\psi(\gamma(x_{k}))}\left\langle \nabla \gamma(x_{k}),X(x_{k})\right\rangle\nonumber\\
=&\frac{u(x_{k})-u(p)+1}{k}\left[\frac{\psi^{\prime}(\gamma(x_{k}))}{\psi(\gamma(x_{k}))}\Delta_X\gamma(x_{k})+\frac{1}{k}\left[\frac{\psi^{\prime}(\gamma(x_{k}))}{\psi(\gamma(x_{k}))}\right]^2|\nabla \gamma|^2(x_{k})\right]\nonumber\\
\leq&\frac{u(x_{k})-u(p)+1}{k}\left[C(\gamma(x_{k})G(\gamma(x_{k})^\frac{1}{2}))^{-\frac{1}{2}}B\gamma(x_{k})^{\frac{1}{2}}G(\gamma(x_{k})^{\frac{1}{2}})^{\frac{1}{2}}\right.\nonumber\\
&\left.+\frac{1}{k}C^2(\gamma(x_{k})G(\gamma(x_{k})^{\frac{1}{2}}))^{-1}A^2\gamma(x_{k})\right]\nonumber\\
=&\frac{u(x_{k})-u(p)+1}{k}\left[CB+\frac{1}{k}C^2A^2G(\gamma(x_{k})^\frac{1}{2})^{-1}\right].\nonumber
\end{align}
and the RHS tends to zero as $k\to+\infty$.
In this case we choose $z_{k}=x_{k}$ and \eqref{2.8}, \eqref{2.10}, \eqref{2.11} together with $u^{*}<+\infty$ show the validity of the lemma.
\end{proof}
The next result follows from the easily verified formula
\begin{equation}\label{2.12}
\Delta_X\varphi(u)=\varphi^\prime(u)\Delta_X u
+\varphi^{\prime\prime}(u)|\nabla u|^2
\end{equation}
valid for $u\in C^2(M)$, $\varphi\in C^2 (\mathbb{R})$ and Lemma \ref{FullOmoriYau} with the same reasoning as in the proof of Theorem 1.31 in \cite{prsMemoirs}; see also Theorem 2.6 in \cite{RRS}. Precisely we have
\begin{lemma}\label{Motomiya}
In the assumptions of Lemma \ref{FullOmoriYau} let $u\in C^2(M)$ be a solution of the differential inequality
\begin{equation}\label{Poisson}
\Delta_X u\geq \rho(u, |\nabla u|)
\end{equation}
with $\rho(t,y)$ continuous in $t$, $C^2$ in $y$ and such that
\[
\ \frac{\partial^2 \rho}{\partial y^2}(t, y)\geq 0\qquad \forall y\geq 0.
\]
 Let $f(t)=\rho(t,0)$. Then a sufficient condition to guarantee that $u^*=\sup_M u<+\infty$, is the existence of a continuous function $F$, positive on $[a, +\infty)$ for some $a\in\mathbb{R}$ and satisfying
\begin{eqnarray}
&\left\{\int_a^tF(s)ds\right\}^{-\frac{1}{2}}\in L^1(+\infty),\nonumber\\
&\limsup_{t\rightarrow+\infty}\frac{\int_a^tF(s)ds}{tF(t)}<+\infty,\nonumber\\
&\liminf_{t\to+\infty}\frac{f(t)}{F(t)}>0,\nonumber\\
&\liminf_{t\to+\infty}\frac{\left\{\int_a^tF(s)ds\right\}^{\frac{1}{2}}}{F(t)}\left.\frac{\partial \rho}{\partial y}\right|_{(t,0)}>-\infty.\nonumber
\end{eqnarray}
Furthermore in this case
\[
\ f(u^*)\leq 0.
\]
\end{lemma}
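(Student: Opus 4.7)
The plan is to split the argument into two stages: first show $u^{*}<+\infty$ by contradiction via a suitable composition $\phi(u)$, then deduce $f(u^{*})\leq 0$ by applying Lemma \ref{FullOmoriYau} directly to $u$. The second stage is immediate: once $u^{*}$ is known to be finite, Lemma \ref{FullOmoriYau} yields a sequence $\{z_{k}\}\subset M$ with $u(z_{k})\to u^{*}$, $|\nabla u(z_{k})|<1/k$ and $\Delta_{X}u(z_{k})<1/k$; inserting these into \eqref{Poisson} gives $\rho(u(z_{k}),|\nabla u(z_{k})|)<1/k$, and the continuity of $\rho$ then yields $f(u^{*})=\rho(u^{*},0)\leq 0$.

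For the first stage, suppose by contradiction that $u^{*}=+\infty$. Set
\[
K(t)=\int_{a}^{t}F(s)\,ds, \qquad \phi(t)=\int_{a}^{t}K(s)^{-1/2}\,ds,
\]
smoothly extended to the left of $a$. The assumption $K^{-1/2}\in L^{1}(+\infty)$ makes $\phi$ bounded above, with $\phi_{\infty}=\lim_{t\to+\infty}\phi(t)<+\infty$. Then $v=\phi(u)\in C^{2}(M)$ satisfies $v^{*}=\phi_{\infty}<+\infty$ by strict monotonicity of $\phi$, and applying Lemma \ref{FullOmoriYau} to $v$ produces a sequence $\{z_{k}\}$ with $v(z_{k})\to v^{*}$, hence $u(z_{k})\to+\infty$, together with $|\nabla v(z_{k})|,\,\Delta_{X}v(z_{k})<1/k$. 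Since $\phi'=K^{-1/2}$ and $\phi''=-\tfrac{1}{2}K^{-3/2}F$, formula \eqref{2.12} together with $|\nabla u|^{2}=K(u)|\nabla v|^{2}$ gives at $z_{k}$
\[
|\nabla u(z_{k})|<\frac{K(u(z_{k}))^{1/2}}{k}, \qquad \Delta_{X}u(z_{k})<\frac{K(u(z_{k}))^{1/2}}{k}+\frac{F(u(z_{k}))}{2k^{2}}.
\]

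Combining these bounds with \eqref{Poisson} and the convexity of $\rho$ in $y$, which via the tangent-line bound yields $\rho(t,y)\geq f(t)+\rho_{y}(t,0)\,y$ for $y\geq 0$, we obtain
\[
f(u(z_{k}))+\rho_{y}(u(z_{k}),0)\,|\nabla u(z_{k})|<\frac{K(u(z_{k}))^{1/2}}{k}+\frac{F(u(z_{k}))}{2k^{2}}.
\]
Dividing through by $F(u(z_{k}))$ and letting $k\to+\infty$, one closes the argument using the remaining three hypotheses: $\limsup K/(tF)<+\infty$, combined with $K^{-1/2}\in L^{1}(+\infty)$, forces $F$ to grow superlinearly and keeps $K(t)^{1/2}/F(t)$ bounded, so the RHS vanishes; $\liminf K^{1/2}\rho_{y}(\cdot,0)/F>-\infty$ together with $|\nabla u(z_{k})|\leq K(u(z_{k}))^{1/2}/k$ absorbs the $\rho_{y}$ term into an $O(1/k)$ error; and $\liminf f/F>0$ gives a strict positive lower bound on the LHS, delivering the contradiction. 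The main technical delicacy is precisely this final balancing: the four conditions on $F$ must be made to work in tandem so that the terms produced by the Omori--Yau estimate are uniformly defeated, and the role of the $\limsup$ hypothesis is exactly to provide the growth control on $F$ that makes this possible.
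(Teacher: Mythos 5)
Your proof is correct and follows essentially the same route as the paper, which simply invokes formula \eqref{2.12}, Lemma \ref{FullOmoriYau} and the reasoning of Theorem 1.31 in \cite{prsMemoirs}; you have reconstructed exactly that argument (contradiction via $v=\phi(u)$ with $\phi'=K^{-1/2}$, transfer of the Omori--Yau estimates through \eqref{2.12}, the tangent-line bound from convexity in $y$, and division by $F(u(z_k))$, noting that $K^{-1/2}\in L^1(+\infty)$ with $K$ increasing forces $tK(t)^{-1/2}\to 0$ so that $K^{1/2}/F$ is controlled by the $\limsup$ hypothesis). The only cosmetic point is that $\phi'(t)=K(t)^{-1/2}$ blows up as $t\to a^{+}$, so the smooth modification of $\phi$ should be performed to the left of some $b>a$ rather than of $a$ itself; this does not affect the asymptotics at the points $z_k$, where $u(z_k)\to+\infty$.
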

In the next result we shall use the following generalized Bochner formula which can be easily deduced from the usual one, see \cite{petwylie2}. For $X\in\mathfrak{X}(M)$ and $u\in C^3(M)$,
\begin{equation}\label{BochnerRS}
\frac{1}{2}\Delta_X|\nabla u|^2=|Hess (u)|^2+\left\langle \nabla\Delta_X u, \nabla u\right\rangle+ \Ric_X(\nabla u, \nabla u)
\end{equation}
with
\begin{equation}\label{RicX}
\Ric_X=\Ric+\frac{1}{2}\mathcal{L}_X\left\langle \,,\,\right\rangle.
\end{equation}
Note that, when $X=\nabla f$, $f\in C^\infty(M)$ then $\Ric_{\nabla f}=\Ric_f$, the usual Bakry-Emery Ricci tensor
\[
\ \Ric_f=\Ric+Hess(f).
\]

We are now ready to state a comparison result for the operator $\Delta_X$ which mildly extends Theorem 2.1 (ii) in \cite{Q1} by Z. Qian. 
\begin{lemma}\label{2.18}
Let $(M, \left\langle \,,\,\right\rangle)$ be a complete manifold of dimension $m$ and $X\in\mathfrak{X}(M)$ a vector field satisfying the growth condition
\begin{equation}\label{BoundX2}
|X|\leq \sqrt{G(r)}
\end{equation}
for some positive nondecreasing function $G\in C^1([0,+\infty))$. Suppose that
\begin{equation}\label{GrowthRicX}
\Ric_X(\nabla r,\nabla r)\geq -(m-1)\overline{G}(r),
\end{equation}
for some $C^1$ positive function $\overline{G}$ on $[0,+\infty)$ such that
\begin{equation}\label{hpG32}
\inf_{[0,+\infty)}\frac{\overline{G}^\prime}{\overline{G}^{\frac{3}{2}}}>-\infty.
\end{equation}
Then there exists $A=A(m)>0$ sufficiently large such that
\begin{equation}\label{EstXLaplr}
\Delta_Xr\leq A\sqrt{\overline{G}}+\sqrt{G}
\end{equation}
pointwise in $M\setminus(\left\{o\right\}\cup cut(o))$ and weakly on all of $M$.
\end{lemma}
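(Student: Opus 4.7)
The plan is to adapt Qian's comparison argument to our setting, working first pointwise along minimizing geodesics away from the cut locus and then extending to all of $M$ by the standard barrier trick. Fix $x \in M \setminus (\{o\} \cup \mathrm{cut}(o))$ and let $\gamma : [0, r(x)] \to M$ be the unique unit-speed minimizing geodesic joining $o$ to $x$. Along $\gamma$, the map $t \mapsto \Delta_X r(\gamma(t))$ is smooth, and we denote its derivative by $(\Delta_X r)'$.

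First I apply the generalized Bochner formula \eqref{BochnerRS} to $u=r$. Since $|\nabla r|\equiv 1$ off the cut locus, the left-hand side vanishes, and since $\mathrm{Hess}(r)(\nabla r,\nabla r)=0$, the Newton inequality gives $|\mathrm{Hess}(r)|^2\geq (\Delta r)^2/(m-1)$. Combined with the hypothesis \eqref{GrowthRicX} and the obvious identity $\langle \nabla \Delta_X r, \nabla r\rangle = (\Delta_X r)'$, this yields the Riccati-type differential inequality
\begin{equation}\label{plan:Ric}
(\Delta_X r)'(t) + \frac{(\Delta r)^2(\gamma(t))}{m-1} \;\leq\; (m-1)\,\overline{G}(t).
\end{equation}
Setting $\phi(t)=\Delta_X r(\gamma(t))$ and $\theta(t)=\langle X,\nabla r\rangle(\gamma(t))$, we have $\Delta r = \phi+\theta$ and, by \eqref{BoundX2}, $|\theta|\leq \sqrt{G(t)}$. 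Completing the square in $(\phi+\theta)^2\geq \phi^2-2|\phi|\sqrt{G}$ rewrites \eqref{plan:Ric} as
\begin{equation}\label{plan:Ric2}
\phi'(t) \;\leq\; (m-1)\overline{G}(t) + \frac{G(t)}{m-1} - \frac{(|\phi(t)|-\sqrt{G(t)})^2}{m-1}.
\end{equation}

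The heart of the proof is the second step: extracting from \eqref{plan:Ric2} the pointwise bound $\phi(t) \leq A\sqrt{\overline{G}(t)} + \sqrt{G(t)}$ with $A=A(m)$. I would argue by contradiction: if for some $t_0>0$ one has $\phi(t_0)>A\sqrt{\overline G(t_0)}+\sqrt{G(t_0)}$, then $|\phi|-\sqrt{G}>A\sqrt{\overline G}$ at $t_0$, so \eqref{plan:Ric2} forces
\[
\phi'(t_0) \;\leq\; \Bigl[(m-1)-\tfrac{A^2}{m-1}\Bigr]\overline{G}(t_0) + \tfrac{G(t_0)}{m-1},
\]
which is strongly negative once $A$ is sufficiently large compared with $m$. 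To turn this local contradiction into a global one, I compare $\phi$ with a barrier $\Psi(t):=A\sqrt{\overline G(t)}+\sqrt{G(t)}$, using the asymptotic behavior of $\overline G$ near $t=0^+$ (where the standard Riemannian Laplacian comparison gives the initial behaviour $\Delta r \sim (m-1)/r$) to start the comparison, and hypothesis \eqref{hpG32} to control $\Psi'(t)\sim \tfrac{A\overline G'}{2\sqrt{\overline G}}$; this is where the boundedness from below of $\overline G'/\overline G^{3/2}$ is indispensable, for it guarantees that $\Psi$ satisfies a reverse Riccati inequality of the form $\Psi'\geq (m-1)\overline G+G/(m-1)-(|\Psi|-\sqrt G)^2/(m-1)$ modulo lower-order terms, so that $\phi-\Psi$ cannot become positive. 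This is the main technical obstacle: the bookkeeping between the growth of $\overline G$, of $G$, and the Riccati dissipation term.

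Finally, to pass from the pointwise bound on $M\setminus(\{o\}\cup \mathrm{cut}(o))$ to a weak inequality on all of $M$ I invoke Calabi's trick: for $x\in \mathrm{cut}(o)$ choose points $o_\epsilon$ slightly along any minimizing geodesic from $x$ back to $o$, so that $x\notin \mathrm{cut}(o_\epsilon)$, apply the pointwise estimate to $r_\epsilon=\mathrm{dist}(\cdot,o_\epsilon)$, and observe that $r_\epsilon+\mathrm{dist}(o,o_\epsilon)$ is an upper barrier for $r$ at $x$. Testing against a nonnegative test function and sending $\epsilon\to 0$, together with the fact that the cut locus has measure zero and the jumps of $\nabla r$ across it have the right sign, delivers the desired distributional inequality \eqref{EstXLaplr} on all of $M$.
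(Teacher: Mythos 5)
Your overall strategy---a pointwise Riccati/barrier comparison for $\phi=\Delta_X r\circ\gamma$ rather than the paper's integrated comparison---is a legitimate alternative route, and your derivation of the Riccati inequality from the generalized Bochner formula is correct. However, two steps fail as written. First, the way you complete the square loses exactly the term you cannot afford to lose: from $(\phi+\theta)^2\geq\phi^2-2|\phi|\sqrt{G}=(|\phi|-\sqrt{G})^2-G$ you arrive at $\phi'\leq(m-1)\overline{G}+\frac{G}{m-1}-\frac{(|\phi|-\sqrt{G})^2}{m-1}$, and at a point where $\phi>A\sqrt{\overline{G}}+\sqrt{G}$ this only gives $\phi'\leq\bigl[(m-1)-\frac{A^2}{m-1}\bigr]\overline{G}+\frac{G}{m-1}$. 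Since no relation between $G$ and $\overline{G}$ is assumed (take $\overline{G}\equiv 1$ and $G$ growing arbitrarily fast), the term $\frac{G}{m-1}$ cannot be absorbed by $\frac{A^2}{m-1}\overline{G}$ for any $A=A(m)$, so the right-hand side is not ``strongly negative'' and the contradiction does not arise. The fix is to keep the discarded $\theta^2$: in the relevant regime $\phi\geq\sqrt{G}\geq|\theta|$ one has $\phi+\theta\geq\phi-\sqrt{G}\geq0$, hence $(\phi+\theta)^2\geq(\phi-\sqrt{G})^2$, which removes the spurious $+\frac{G}{m-1}$.

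Second, the barrier comparison has nowhere to start: near $t=0$ one has $\Delta r\sim(m-1)/t$, and since $\langle X,\nabla r\rangle$ is bounded near $o$, also $\phi(t)\to+\infty$ as $t\to0^+$, whereas your barrier $\Psi=A\sqrt{\overline{G}}+\sqrt{G}$ is bounded at $0$. Thus $\phi>\Psi$ on an initial interval and the assertion that ``$\phi-\Psi$ cannot become positive'' is vacuous; you would need to compare instead with a barrier carrying the same $1/t$ singularity, e.g.\ $(m-1)h'/h+\sqrt{G}$ with $h''=\overline{G}h$, $h(0)=0$, $h'(0)=1$. This is exactly what the paper does, in integrated form: it multiplies the Riccati inequality by $h^2$ (which vanishes at $0$, killing the singular initial data), integrates by parts to move the derivative off $\langle X,\dot\gamma\rangle$---this is where $G'\geq0$ and $(h^2)'\geq0$ are used---and only at the end converts $(m-1)h'/h$ into $A\sqrt{\overline{G}}$ by a Sturm comparison with an explicit supersolution; hypothesis \eqref{hpG32} enters there, not in a reverse Riccati inequality for a bounded barrier. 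Your Calabi-trick paragraph for the weak inequality on all of $M$ is standard and fine.
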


\begin{proof}
Let $h$ be the solution on $\mathbb{R}^{+}_{0}$ of the Cauchy problem
\begin{equation}\label{CauchyProblemh}
\left\{
\begin{array}{l}
h^{\prime\prime}-\overline{G}h=0\\
h\left(0\right)=0, \,\,\,h^{\prime}\left(0\right)= 1.\\
\end{array}
\right.
\end{equation}
Note that $h>0$ on $\mathbb{R}^{+}$ since $\overline{G}\geq0$. Fix $x\in M\setminus\left(cut\left(o\right)\cup\left\{o\right\}\right)$, with $cut(o)$ the cut locus of $o$, and let $\gamma:\left[0,l\right]\rightarrow M$, $l=length\left(\gamma\right)$, be a minimizing geodesic with $\gamma\left(0\right)=o$, $\gamma\left(l\right)=x$. Note that $\overline{G}\left(r\circ\gamma\right)\left(t\right)=\overline{G}\left(t\right)$. From Bochner formula applied to the distance function $r$ (outside $cut(o)\cup \left\{o\right\}$) we have
\begin{equation}\label{Bochnerr}
\ 0=\left|Hess\left(r\right)\right|^{2}+\left\langle \nabla r,\nabla\Delta r\right\rangle+ Ric\left(\nabla r, \nabla r\right)
\end{equation}
so that, using the inequality
\[
\ |Hess(r)|^2\geq\frac{(\Delta r)^2}{m-1},
\]
 it follows that  the function $\varphi\left(t\right)=\left(\Delta r\right)\circ \gamma\left(t\right)$, $t\in\left(0,l\right]$, satisfies the Riccati differential inequality
\begin{equation}\label{Riccati}
\varphi^{\prime}+\frac{1}{m-1}\varphi^{2}\leq -Ric\left(\nabla r\circ\gamma,\nabla r\circ\gamma\right)
\end{equation}
on $\left(0,l\right]$. With $h$ as in \eqref{CauchyProblemh} and using the definition of $\Ric_{X}$, \eqref{GrowthRicX} and \eqref{Riccati} we compute
\begin{align*}
\left(h^{2}\varphi\right)^{\prime}=&2hh^{\prime}\varphi+h^{2}\varphi^{\prime}\\
\leq&2hh^{\prime}\varphi-\frac{h^{2}\varphi^{2}}{m-1}+h^{2}\left(m-1\right)\overline{G}\left(t\right)+ h^2\left\langle \nabla_{\nabla r}X,\nabla r\right\rangle\circ\gamma\\
=&-\left(\frac{h\varphi}{\sqrt{m-1}}-\sqrt{m-1}h^{\prime}\right)^2+(m-1)(h^{\prime})^2+h^2(m-1)\overline{G}(t)\\
&+h^2(\left\langle X, \dot{\gamma}\right\rangle\circ\gamma)^{\prime}.
\end{align*}
We let
\[
\ \varphi_{\overline{G}}\left(t\right)=\left(m-1\right)\frac{h^{\prime}}{h}\left(t\right)
\]
so that, using \eqref{CauchyProblemh}
\[
\ \left(h^{2}\varphi_{\overline{G}}\right)^{\prime}=\left(m-1\right)\left(h^{\prime}\right)^{2}+\left(m-1\right)\overline{G}\left(t\right)h^{2}.
\]
Inserting this latter into the above inequality we obtain
\begin{equation*}
\left(h^{2}\varphi\right)^{\prime}\leq\left(h^{2}\varphi_{\overline{G}}\right)^{\prime}+h^{2}(\left\langle X, \dot{\gamma}\right\rangle\circ\gamma)^{\prime}
\end{equation*}
Integrating on $\left[0,r\right]$ and using \eqref{CauchyProblemh} yields
\begin{equation}\label{comp1}
h^{2}\left(r\right)\varphi\left(r\right)\leq h^{2}\left(r\right)\varphi_{\overline{G}}\left(r\right)+\int^{r}_{0}h^{2}(\left\langle X,\dot{\gamma}\right\rangle\circ\gamma)^{\prime}.
\end{equation}
Next, we define that
\begin{equation*}
\varphi_{X}=\left(\Delta_{X}r\right)\circ\gamma
=\left(\Delta r\right)\circ\gamma-\left\langle X,\nabla r\right\rangle\circ\gamma=\varphi-\left\langle X,\nabla r\right\rangle\circ\gamma.
\end{equation*}
Thus, using \eqref{comp1}, \eqref{CauchyProblemh} and integrating by parts we compute
\begin{align*}
h^{2}\varphi_{X}(r)\leq&h^{2}\varphi_{\overline{G}}(r)-h^{2}\left\langle X,\nabla r\right\rangle\circ\gamma(r)+\int^{r}_{0}h^{2}(\left\langle X,\dot{\gamma}\right\rangle\circ\gamma)^{\prime}dt\\
=&h^{2}\varphi_{\overline{G}}-h^{2}\left\langle X,\nabla r\right\rangle\circ\gamma(r)+h^{2}\left(\left\langle X, \dot{\gamma}\right\rangle\circ\gamma\right)\left.\right|^{r}_{0}-\int^{r}_{0}\left(h^{2}\right)^{\prime}(\left\langle X,\dot{\gamma}\right\rangle\circ\gamma)dt\\
=&h^{2}\varphi_{\overline{G}}(r)-\int^{r}_{0}\left(h^{2}\right)^{\prime}\left(\left\langle X,\dot{\gamma}\right\rangle\circ\gamma\right)dt,
\end{align*}
that is,
\begin{equation}\label{comp2}
h^{2}\varphi_{X}(r)\leq h^{2}\varphi_{\overline{G}}(r)-\int^{r}_{0}\left(h^{2}\right)^{\prime}(\left\langle X,\dot{\gamma}\right\rangle\circ\gamma)dt
\end{equation}
on $\left(0,l\right]$. Observe now that, by the Cauchy--Schwarz inequality and \eqref{BoundX2},
\begin{equation*}
-\left\langle X,\dot{\gamma}\right\rangle\leq|X|\leq\sqrt{G(r)}.
\end{equation*}
Inserting this into \eqref{comp2}, using the fact that $(h^2)^\prime\geq 0$, integrating by parts and recalling that $G^\prime\geq 0$, we obtain
\[
\ h^{2}\varphi_{X}(r)\leq h^{2}\varphi_{\overline{G}}(r)+h^{2}\sqrt{G(r)}-\int_{0}^r\frac{h^2G^{\prime}(t)}{2\sqrt{G(t)}}dt\leq h^{2}\varphi_{\overline{G}}(r)+h^{2}\sqrt{G(r)}
\]
on $\left(0,l\right]$. It follows that
\begin{equation*}
\varphi_{X}(r)\leq\varphi_{\overline{G}}(r)+\sqrt{G\left(r\right)}
\end{equation*}
on $\left(0,l\right]$. In particular
\begin{equation}\label{comp3}
\Delta_{X}r\left(x\right)\leq\left(m-1\right)\frac{h^{\prime}\left(r\left(x\right)\right)}{h\left(r\left(x\right)\right)}+ \sqrt{G(r(x))}
\end{equation}
pointwise on $M\setminus\left(\left\{0\right\}\cup cut\left(o\right)\right)$.
Proceeding as in Theorem 2.4 of \cite{prsbook} one shows that (\ref{comp3})
holds weakly on all of $M$. Moreover, fixing $\overline{A}\in\mathbb{R}_+$ and defining
\begin{equation*}
\overline{h}(t)=\frac{\overline{A}^{-1}}{\sqrt{G(0)}}\left\{e^{\overline{A}\int_{0}^{t}\sqrt{\overline{G}(s)}ds}-1\right\},
\end{equation*}
we have $\overline{h}(0)=0$, $\overline{h}^{\prime}(0)=1$ and
\begin{equation*}
\overline{h}^{\prime\prime}-\overline{G}\,\overline{h}\geq\frac{\overline{G}}{\sqrt{\overline{G}(0)}}\left[e^{\overline{A}\int_{0}^{t}\sqrt{\overline{G}(s)}ds}\left(\inf_{[0,+\infty)}\frac{\overline{G}^{\prime}}{2\overline{G}}^{\frac{3}{2}}(t)+\overline{A}-\frac{1}{\overline{A}}\right)\right]\geq0
\end{equation*}
by \eqref{hpG32}, for $\overline{A}$ sufficiently large. So by Sturm comparison (see e.g. Lemma 2.1 in \cite{prsbook}),
\begin{equation*}
\frac{h^{\prime}}{h}\leq \frac{\overline{h}^{\prime}}{\overline{h}}=D\sqrt{\overline{G}}\frac{\exp{\left[D\int_0^t\sqrt{\overline{G}}ds\right]}}{\exp{\left[D\int_0^t\sqrt{\overline{G}}ds\right]}-1}\leq D\sqrt{\overline{G}}
\end{equation*}
for some constant $D>0$ sufficiently large. Thus from \eqref{comp3} we finally obtain
\begin{eqnarray*}
\Delta_X r&\leq& (m-1)D\sqrt{\overline{G}}+\sqrt{G}\\
&\leq&A\sqrt{\overline{G}}+\sqrt{G}
\end{eqnarray*}
for some constant $A=A(m)>0$, pointwise in $M\setminus(\left\{o\right\}\cup cut(o))$ and weakly on all of $M$.
\end{proof}

\begin{remark}
\rm{
In case $G$ is non--increasing, inspection of the above proof yields the estimate
\[
\ \Delta_X r\leq A\sqrt{\overline{G}}+\sqrt{G(0)}
\]
instead of \eqref{EstXLaplr}.
}
\end{remark}
\begin{remark}\label{rm15}
\rm{
  Since with the above notation $\varphi=\varphi_X +\left\langle X,\nabla r\right\rangle\circ\gamma$, from \eqref{EstXLaplr} and \eqref{BoundX2} we obtain $\Delta r \leq B\pa{\sqrt{\overline{G}}+\sqrt{G}}$ for some $B\geq\max\left\{A, 2\right\}$. This observation shows that if $\pa{M, \metric, X}$ is a soliton structure on the complete manifold $\pa{M, \metric}$ satisfying \eqref{BoundX} with $G$ as in \eqref{hpG}, applying Theorem 1.9 in \cite{prsMemoirs} with $\gamma(x)=r(x)^2$, we have the validity of the full Omori-Yau maximum principle (and in particular, $\pa{M, \metric}$ is stochastically complete; see for instance the consequence given in Corollary \ref{19} below). In particular, using again Remark \ref{RmkSharp}, one gets that on a gradient Ricci soliton we have always the validity of the full Omori--Yau maximum principle for the operator $\Delta$. This last fact was, very recently, also established in \cite{BPS}. We are grateful to S. Pigola for pointing out this to us. See also \cite{FLGR-OY} for the shrinking gradient case.
}
\end{remark}

\begin{proof}(of Theorem \ref{ThA}). First of all we observe that since $(M, \left\langle \,,\,\right\rangle, X)$ is a soliton structure on $M$,
\[
\ \Ric_X(\nabla r, \nabla r)=\lambda.
\]
Since by assumption $X$ satisfies \eqref{BoundX}, Lemma \ref{2.18} holds. Hence the function $\gamma(x)=r(x)^2\in C^2(M\setminus\mathrm{cut}(o))$, which meets assumptions \eqref{hp2.2}, \eqref{hp2.3} of Lemma \ref{FullOmoriYau}, satisfies also \eqref{hp2.4} out of $\mathrm{cut}(o)$. An ispection of the proof of Lemma \ref{FullOmoriYau} shows that we need $\gamma$ to be $C^2$ only in a neighborhood of the points of the sequence $\left\{x_k\right\}$. Moreover if some $x_k\in \mathrm{cut}(o)$ we can again guarantee the validity of \eqref{O-YMaxPr} along $\left\{x_k\right\}$ by means of an adaptation of the Calabi trick.  We therefore can apply Lemma \ref{Motomiya}. On the other hand, by \eqref{eq8} of Lemma \ref{XlapRicS}, we have
\begin{equation}\label{2.30}
\frac{1}{2}\Delta_X\SSS=\lambda \SSS-|\Ric|^2=\lambda \SSS-\frac{\SSS^2}{m}-\left|\Ric-\frac{\SSS}{m}\left\langle \,,\,\right\rangle\right|^2
\end{equation}
from which, setting $u=-\SSS$ we immediately deduce the differential inequality
\begin{equation}\label{2.31}
\frac{1}{2}\Delta_Xu\geq\lambda u+\frac{u^2}{m}.
\end{equation}
We apply Lemma \ref{Motomiya} to \eqref{2.31} with the choices $F(t)=t^2$,
\[
\ \rho(u,|\nabla u|)=\lambda u+\frac{u^2}{m}.
\]
Then $u^*<+\infty$ and
\begin{equation}\label{2.32}
\lambda u^*+\frac{(u^*)^2}{m}\leq 0.
\end{equation}
But $u^*=-\SSS_*$ so that the claimed bounds on $\SSS_*$ in the statement of Theorem \ref{ThA} follow immediately from \eqref{2.32}.

\textit{Case (i).} Suppose now $\lambda<0$ and that for some $x_0\in M$
\[
\ S(x_0)=S_*=m\lambda.
\]
In particular $S(x)\geq m\lambda$ on $M$ and the function $w=S-m\lambda$ is non--negative. From \eqref{2.30} we immediately see that
\begin{equation}\label{2.33}
\Delta_X w+2\lambda w\leq \Delta_X w +\frac{2S}{m}w\leq 0.
\end{equation}
We let
\[
\ \Omega_0=\left\{x\in M : w(x)=0\right\}.
\]
$\Omega_0$ is closed and non--empty since $x_0\in\Omega_0$. Let $y\in\Omega_0$. By the maximum principle applied to \eqref{2.33}, $w\equiv 0$ in a neighborhood of $y$ so that $\Omega_0$ is open. Thus $\Omega_0=M$ and $S(x)=m\lambda $ on $M$. From equation \eqref{2.30} we then deduce $|\Ric-\frac{S}{m}\left\langle \,,\,\right\rangle|\equiv 0$, that is, $(M, \left\langle \,,\,\right\rangle)$ is Einstein and from \eqref{RicSolEq} $X$ is a Killing field. Analogously, if $S(x_0)=S_*=0$ for some $x_0\in M$ we deduce that $(M, \left\langle \,,\,\right\rangle)$ is Ricci flat and $X$ is a homothetic vector field.\\
\textit{Case (ii).} Suppose $\lambda=0$ and that, for some $x_0\in M$,
\[
\ S(x_0)=S_*=0.
\]
From \eqref{2.30}
\[
\ \Delta_XS\leq -2\frac{S^2}{m}\leq 0.
\]
Since $S(x)\geq S_*=0$ by the maximum principle we conclude $S(x)\equiv 0$. By \eqref{2.30}, $(M, \left\langle \,,\,\right\rangle)$ is Ricci flat and from \eqref{RicSolEq} $X$ is a Killing field.\\
\textit{Case (iii).} Finally, suppose $\lambda>0$. Then $S(x)\geq S_{*}\geq 0$. From \eqref{2.30}
\[
\ \Delta_XS-2\lambda S\leq 0.
\]
If $S(x_0)=S_{*}=0$ for some $x_0\in M$, then again by the maximum
principle $S(x)\equiv 0$. From \eqref{2.30}, $(M, \left\langle
\,,\,\right\rangle)$ is Ricci flat and from \eqref{RicSolEq},
$\mathcal{L}_X\left\langle \,,\,\right\rangle=2\lambda\left\langle
\,,\,\right\rangle$, so that $X$ is a homothetic vector field.
Suppose now $\SSS_*=m\lambda>0$. From \eqref{2.30}
\[
\ \Delta_X S\leq\frac{2}{m} S(m\lambda  -S)
\]
and since $S(x)\geq S_*=m\lambda>0$
\[
\ \Delta_X S\leq 0\quad\textrm{on}\quad M.
\]
By the maximum principle $S(x)\equiv m\lambda$. From \eqref{2.30},
$(M,\left\langle \,,\,\right\rangle)$ is Einstein and
\eqref{RicSolEq} implies that $X$ is a Killing field. Furthermore,
since $\lambda>0$, $(M, \left\langle \,,\,\right\rangle)$ is
compact by Myers' theorem.
\end{proof}
Before proceeding with the proof of Theorem \ref{ThB} we recall the next estimate due to Huisken, see \cite{huisken}, Lemma 3.4:
\begin{equation}\label{huisEst}
  \abs{\TTT_{ik}\TTT_{jk}\WWW_{ijkl}} \leq \frac{\sqrt{2}}{2}\sqrt{\frac{m-2}{m-1}}\abs{W}\abs{T}^2.
\end{equation}
\begin{proof}(of Theorem \ref{ThB}). From \eqref{XlapT1}, Okumura's lemma \cite{okumura} and Huisken's estimate \eqref{huisEst} we obtain
\begin{equation*}
\frac{1}{2}\Delta_X|T|^2\geq |\nabla T|^2+2\pa{\lambda-\frac{m-2}{m(m-1)}\SSS-\sqrt{\frac{m-2}{2(m-1)}}\abs{W}}|T|^2-\frac{4}{\sqrt{m(m-1)}}|T|^3.
\end{equation*}
We set $u=|T|^2$ and from the above we deduce
\begin{equation}\label{2.34}
\frac{1}{2}\Delta_X u\geq 2\pa{\lambda-\frac{m-2}{m(m-1)}\SSS^*-\sqrt{\frac{m-2}{2(m-1)}}\abs{W}^*}u-\frac{4}{\sqrt{m(m-1)}}u^{\frac{3}{2}}.
\end{equation}
We note that if $|T|^*=+\infty$ then \eqref{1.10} is obviously satisfied otherwise $u^*<+\infty$ and in the assumptions of Theorem \ref{ThB} we have the validity of Lemma \ref{FullOmoriYau}. It follows that
\[
\ u^*\left[\frac{1}{2}\left(\lambda\sqrt{m(m-1)}-\frac{m-2}{\sqrt{m(m-1)}}\,\SSS^*-\sqrt{\frac{m(m-2)}{2}}\abs{W}^*\right)-\sqrt{u^*}\right]\leq 0,
\]
from which we deduce that either $u^*=0$, that is, $T\equiv 0$ on $M$, or $|T|^*$ satisfies \eqref{1.10}. In the first case $(M, \left\langle \,,\,\right\rangle)$ is Einstein.
\end{proof}
\begin{remark}
\rm{
As observed (in the gradient case) in \cite{C}, if $\pa{M, \metric}$ is Einstein and in addition it is a shrinking soliton which is not Ricci flat, by Theorem \ref{ThA}, $S$ is a positive constant and thus
  $\pa{M, \metric}$ is compact by Myers' Theorem. In this latter case, if $\abs{\WWW}$ is sufficiently small, precisely if $\abs{\WWW}^2 \leq \frac{4}{(m+1)m(m-1)(m-2)}\SSS^2$, then
  $\pa{M, \metric}$ has positive curvature operator (\cite{huisken}, Corollary 2.5). Thus, from Tachibana \cite{tachibana}, $\pa{M, \metric}$
  has positive constant sectional curvature and it is therefore a finite quotient of $S^m$.}
\end{remark}

In the next Proposition, as a consequence of Lemma \ref{2.18}, we deduce estimates for the volume growth of geodesic spheres and geodesic balls assuming a suitable radial control on $\Ric_X$ and on the growth of the vector field $X$.

\begin{proposition}\label{2.52}
Let $(M, \metric)$ be a complete manifold and $X\in\mathfrak{X}(M)$ a vector field satisfying
\[
\abs{X}\leq \sqrt{G(r)}
\]
for some nonnegative nondecreasing function $G \in C^0\pa{[0, +\infty)}$.
 Assume
\[
\ \Ric_X(\nabla r, \nabla r)\geq -(m-1)\overline{G}(r)
\]
for some smooth positive function $\overline{G}(r)$ on $[0, +\infty)$ such that
\[
\inf_{[0, +\infty)} \frac{\overline{G}'}{\overline{G}^{3/2}} >-\infty.
\]
Then
\begin{equation}\label{vol1}
\ \rm{vol}(\partial B_r)\leq Ce^{B\int_0^r\pa{\sqrt{\overline{G}(s)}+\sqrt{G(s)}} ds}
\end{equation}
for almost every $r$, and, as a consequence,
\begin{equation}\label{vol2}
\ \rm{vol}(B_r)\leq C\int_0^r\pa{e^{B\int_0^t\pa{\sqrt{\overline{G}(s)}+\sqrt{G(s)} ds}}} dt+ D,
\end{equation}
 with $C, B, D$ sufficiently large positive constants.
\end{proposition}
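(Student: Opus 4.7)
The plan is to derive from Lemma \ref{2.18} a pointwise upper bound on $\Delta r$ and then apply the standard Bishop--Gromov--type integration along radial geodesics in polar coordinates.

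First, since $\abs{X}\leq\sqrt{G(r)}$ and $\Delta_X r = \Delta r - \pair{X,\nabla r}$, an application of Lemma \ref{2.18} (whose hypotheses coincide with those of the proposition, up to a harmless regularization of $G$ which does not alter the argument) yields
\[
\Delta r \leq A\sqrt{\overline{G}(r)} + 2\sqrt{G(r)} \leq B\pa{\sqrt{\overline{G}(r)} + \sqrt{G(r)}},
\]
pointwise on $M\setminus\pa{\set{o}\cup\mathrm{cut}(o)}$ and weakly on $M$, for some constant $B=B(m,A)>0$. This is exactly the observation made in Remark \ref{rm15}.

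Next, I would work in geodesic polar coordinates at $o$ and denote by $\mathcal{A}(r,\xi)$, with $\xi\in\mathbb{S}^{m-1}$, the density of the Riemannian volume form in these coordinates, extended by zero beyond the cut-radius along each direction $\xi$. Off the cut locus one has the standard identity $\partial_r\log\mathcal{A} = \Delta r$, while $\mathcal{A}(\cdot,\xi)$ can only drop at cut points, so the inequality above is preserved in the monotone sense:
\[
\frac{\partial}{\partial r}\log\mathcal{A}(r,\xi) \leq B\pa{\sqrt{\overline{G}(r)}+\sqrt{G(r)}}.
\]
Integrating from some fixed small $r_0>0$ up to $r$ and using the asymptotic $\mathcal{A}(r,\xi)\sim r^{m-1}$ as $r\to 0^{+}$ to bound $\log\mathcal{A}(r_0,\xi)$ uniformly in $\xi$, one obtains, after absorbing constants, a pointwise upper bound of the form
\[
\mathcal{A}(r,\xi)\leq C'\exp\pa{B\int_0^r\pa{\sqrt{\overline{G}(s)}+\sqrt{G(s)}}\,ds}.
\]

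Integration of this inequality over $\mathbb{S}^{m-1}$ produces \eqref{vol1} for a.e.\ $r$, and a further integration in $r$ (via the coarea formula) yields \eqref{vol2}, the additive constant $D$ absorbing the volume of a small geodesic ball around $o$. The only slightly delicate point is the behaviour of $\mathcal{A}(\cdot,\xi)$ across the cut locus; this can be handled either by invoking the weak form of Lemma \ref{2.18} valid on all of $M$ or by the classical monotonicity of $\mathcal{A}(\cdot,\xi)$ across cut points, both of which are routine. No substantial analytic obstacle is expected beyond this standard bookkeeping.
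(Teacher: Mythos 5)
Your proof is correct, and it starts from the same key input as the paper: the estimate $\Delta r \leq B\left(\sqrt{\overline{G}(r)}+\sqrt{G(r)}\right)$ obtained by combining Lemma \ref{2.18} with $|X|\leq\sqrt{G(r)}$, which is precisely the content of Remark \ref{rm15}. Where you diverge is in the passage from this Laplacian comparison to the volume estimates. You integrate the pointwise identity $\partial_r\log\mathcal{A}(r,\xi)=\Delta r$ for the volume density in geodesic polar coordinates along each radial direction, using the drop of $\mathcal{A}(\cdot,\xi)$ at the cut radius and the asymptotics $\mathcal{A}(r,\xi)\sim r^{m-1}$ near $o$; this is the classical Bishop--Gromov Jacobian argument. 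The paper instead works entirely with the weak (distributional) form of the inequality $\Delta r\leq (m-1)h'/h$, where $h(r)=\exp\left(\tfrac{B}{m-1}\int_0^r\left(\sqrt{\overline{G}}+\sqrt{G}\right)\right)-1$, testing it against the radial cut-off $\rho_\varepsilon(r)h(r)^{1-m}$ and invoking the coarea formula; this yields the monotonicity of $r\mapsto \mathrm{vol}(\partial B_r)/h(r)^{m-1}$ for a.e.\ $r$, and hence \eqref{vol1}, without introducing the polar-coordinate density or touching the cut locus (which is subsumed in the validity of the weak inequality on all of $M$). The two routes are equivalent in strength for the stated conclusion; the paper's gives as a by-product the slightly stronger monotonicity of the quotient $\mathrm{vol}(\partial B_r)/h(r)^{m-1}$, while yours is more elementary but requires the (standard) bookkeeping of $\mathcal{A}$ across cut points that you correctly flag. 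Your observation that the $C^0$ hypothesis on $G$ in the Proposition versus the $C^1$ positivity required in Lemma \ref{2.18} is resolved by a harmless nondecreasing majorization is also consistent with what the paper does implicitly.
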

\begin{proof}
We let
\[
\ h(r)=e^{\frac{B}{m-1}\int_0^r\pa{\sqrt{\overline{G}(s)}+\sqrt{G(s)}} ds} -1,
\]
with $B\geq \max \left\{A, 2\right\}$. Since $\Delta r=\Delta_X r+\left\langle X, \nabla r\right\rangle$, from \eqref{EstXLaplr}, \eqref{BoundX} and the choice of $h$ we have
\[
\ \Delta r\leq (m-1)\frac{h^{\prime}(r)}{h(r)}
\]
pointwise on $M\setminus\left(\left\{0\right\}\cup cut\left(o\right)\right)$ and weakly on all of $M$.
 We now follow the argument of the proof of Theorem 2.14 of \cite{prsbook}. Thus for each $\varphi\in Lip_{0}(M)$, $\varphi\geq 0$,
\begin{equation}\label{2.53_68}
-\int\left\langle \nabla r, \nabla \varphi\right\rangle\leq (m-1)\int\frac{h^{\prime}(r(x))}{h(r(x))}\varphi.
\end{equation}
Next, we fix $0<r<R$ and $\varepsilon>0$ and we let $\rho_\varepsilon$ be the piecewise linear function
\begin{equation*}
\rho_\varepsilon(t)=
\begin{cases}0\qquad &\textrm{if }t\in [0,r)\\
\frac{t-r}{\varepsilon}\qquad &\textrm{if }t\in [r,r+\epsilon)\\
1\qquad &\textrm{if }t\in [r+\varepsilon,R-\varepsilon)\\
\frac{R-t}{\varepsilon}\qquad&\textrm{if } t\in[R-\varepsilon,R)\\
0\qquad &\textrm{if }t\in [R,+\infty),
\end{cases}
\end{equation*}
and we define the radial cut-off function
\[
\ \varphi_{\varepsilon}(x)=\rho_\varepsilon(r(x))h(r(x))^{1-m}.
\]
Indicating with $\chi_{s,t}$, $s<t$, the characteristic function of the annulus $B_t\setminus B_s$, we have
\[
\ \nabla\varphi_\varepsilon=\left\{\frac{1}{\varepsilon}\chi_{r, r+\varepsilon}-\frac{1}{\epsilon}\chi_{R-\varepsilon, R}-(m-1)\frac{h^{\prime}(r)}{h(r)}\rho_{\varepsilon}\right\}h(r)^{1-m}\nabla r
\]
for a.e. $x\in M$. Therefore, using $\varphi_\varepsilon$ into \eqref{2.53_68} and simplifying we get
\[
\ \frac{1}{\varepsilon}\int_{B_R\setminus B_{R-\varepsilon}}h(r)^{1-m}\leq \frac{1}{\varepsilon}\int_{B_{r+\varepsilon}\setminus B_r}h(r)^{1-m}.
\]
Using the co--area formula
\[
\ \frac{1}{\varepsilon}\int_{R-\varepsilon}^{R} h(t)^{1-m}\mathrm{vol}(\partial B_t)dt\leq \frac{1}{\varepsilon}\int_r^{r+\varepsilon} h(t)^{1-m}\mathrm{vol}(\partial B_t)dt
\]
and letting $\epsilon\searrow0^+$
\[
\ \frac{\rm{vol}(\partial B_R)}{h(R)^{m-1}}\leq \frac{\rm{vol}(\partial B_r)}{h(r)^{m-1}},
\]
for a.e. $0<r<R$. Letting $r\rightarrow 0$, recalling that
$\rm{vol}(\partial B_r)$ $\sim c_mr^{m-1}$ and $h(r)\sim r$, we
obtain that
\[
\rm{vol}(\partial B_r) \leq C e^{B\int_{0}^R\pa{\sqrt{\overline{G}(s)}+\sqrt{G(s)}}ds}
\]
for some constant $C>0$ and a.e. $R$. Using the co--area formula we then also deduce \eqref{vol2}.
\end{proof}
With the aid of the above estimates we can again conclude that
\begin{corollary}\label{2.54}
In the assumptions of Proposition \ref{2.52} if $G(r)$ and $\overline{G}(r)$ are of the form $r^2\prod_{j=1}^{N}\pa{log^{(j)}(r)}^2$ for $r\gg1$, then $(M, \metric)$ is stochastically complete.
\end{corollary}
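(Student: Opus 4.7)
The plan is to combine the volume estimate \eqref{vol2} of Proposition \ref{2.52} with Grigor'yan's volume test for stochastic completeness: if
\[
\frac{r}{\log \mathrm{vol}(B_r)} \notin L^1(+\infty),
\]
then $(M,\metric)$ is stochastically complete. So the whole task is to show that the volume bound produced by Proposition \ref{2.52} is compatible with this integral test when $G$ and $\overline{G}$ grow like $r^2\prod_{j=1}^{N}(\log^{(j)}(r))^2$.

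First, I would estimate the integrals appearing in \eqref{vol2}. Since for $r\gg 1$ we have
\[
\sqrt{G(r)}+\sqrt{\overline{G}(r)}\leq c_1\,r\prod_{j=1}^{N}\log^{(j)}(r),
\]
an integration by parts (or the elementary identity $\frac{d}{dr}\bigl(\tfrac12 r^{2}\prod_{j}\log^{(j)}(r)\bigr)=r\prod_{j}\log^{(j)}(r)+\text{lower order}$) gives
\[
\int_{0}^{r}\bigl(\sqrt{G(s)}+\sqrt{\overline{G}(s)}\bigr)\,ds\leq c_{2}\,r^{2}\prod_{j=1}^{N}\log^{(j)}(r)
\]
for $r$ sufficiently large. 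Plugging this into \eqref{vol2}, the outer integral in $t$ is absorbed by enlarging the constant in the exponent, yielding
\[
\mathrm{vol}(B_r)\leq C_{3}\exp\!\Bigl(C_{4}\,r^{2}\prod_{j=1}^{N}\log^{(j)}(r)\Bigr)
\]
for $r\gg 1$ and some constants $C_{3},C_{4}>0$.

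Consequently, for $r$ large,
\[
\frac{r}{\log \mathrm{vol}(B_r)}\geq \frac{c_{5}}{r\,\prod_{j=1}^{N}\log^{(j)}(r)},
\]
and the classical fact that $\int^{+\infty}\!\frac{dr}{r\prod_{j=1}^{N}\log^{(j)}(r)}=+\infty$ (proved by the iterated substitution $u=\log^{(N)}(r)$) shows that $r/\log\mathrm{vol}(B_r)\notin L^{1}(+\infty)$. Grigor'yan's test then gives stochastic completeness, concluding the proof.

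The only genuinely non-routine point is the integral asymptotics $\int_{0}^{r}\sqrt{G(s)}\,ds\asymp r^{2}\prod_{j}\log^{(j)}(r)$, which has to be checked once and for all by induction on $N$ using $\frac{d}{ds}\log^{(j)}(s)=1/\bigl(s\prod_{i<j}\log^{(i)}(s)\bigr)$; everything else is book-keeping of constants. An alternative to invoking Grigor'yan's criterion would be to argue directly via Khas'minskii's test by constructing a suitable radial supersolution to $\Delta_X u\leq \alpha u$ using the same comparison Lemma \ref{2.18}; this would produce the same conclusion without changing the technical core of the argument.
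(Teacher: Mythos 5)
Your proposal is correct and follows exactly the paper's route: feed the volume bound of Proposition \ref{2.52} into Grigor'yan's volume test $r/\log\mathrm{vol}(B_r)\notin L^1(+\infty)$. The paper states this in one line, leaving implicit the estimates $\int_0^r(\sqrt{G}+\sqrt{\overline{G}})\,ds\lesssim r^2\prod_j\log^{(j)}(r)$ and the divergence of $\int^{+\infty}\frac{dr}{r\prod_j\log^{(j)}(r)}$, which you verify correctly.
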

\begin{proof}
From Proposition \ref{2.52} and the assumption on $G(r)$ and $\overline{G}(r)$
\[
\ \frac{r}{\log\rm{vol}(B_r)}\notin L^1(+\infty)
\]
and we can apply Grigor'yan sufficient condition, \cite{G}, for stochastic completeness.
\end{proof}

An immediate consequence of Corollary \ref{2.54} is, for instance, the following

\begin{corollary}\label{19}
If $(M, \left\langle \,,\,\right\rangle)$ is a complete manifold supporting a generic Ricci soliton structure satisfying \eqref{BoundX} then it can not be minimally immersed into a non--degenerate cone of the Euclidean space.
\end{corollary}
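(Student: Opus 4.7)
The plan is to first derive stochastic completeness of $(M,\metric)$ from Corollary \ref{2.54} and then combine it with the standard non-immersion principle for stochastically complete manifolds. Since $(M,\metric,X)$ is a soliton, equation \eqref{RicSolEq} immediately gives $\Ric_X \equiv \lambda\metric$, so $\Ric_X(\nabla r,\nabla r)\equiv\lambda$ is constant. The radial lower bound required by Corollary \ref{2.54} is therefore met with $\overline G$ of the iterated-logarithmic form $r^2\prod_{j=1}^N(\log^{(j)}r)^2$ for $r\gg1$ (with multiplicative constant exceeding $-\lambda/(m-1)$ if $\lambda<0$); combined with the assumed growth $|X|\leq\sqrt{G(r)}$ from \eqref{BoundX} with $G$ of the form prescribed in Remark \ref{RmkSharp}, Corollary \ref{2.54} applies and yields that $(M,\metric)$ is stochastically complete.

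Suppose now by contradiction that $\varphi:(M,\metric)\to\mathbb{R}^n$ is a minimal isometric immersion whose image lies in a non-degenerate cone which, after a rigid motion, we write as $\mathcal{C}=\{y\in\mathbb{R}^n:\langle y,\nu\rangle\geq\alpha|y|\}$ with $|\nu|=1$ and $\alpha=\cos\theta_0\in(0,1)$. Minimality yields the two key identities
\[
\Delta_M\langle\varphi,\nu\rangle=0,\qquad \Delta_M|\varphi|^2=2m,
\]
and in particular the lower bound $\Delta_M|\varphi|\geq(m-1)/|\varphi|$ on $\{|\varphi|>0\}$. The strategy is to build a bounded $C^2$ function $u$ on $M$, constructed from $\langle\varphi,\nu\rangle$ and a suitable bounded modification of $|\varphi|$ (for instance $u=F(\alpha|\varphi|-\langle\varphi,\nu\rangle)$ for an appropriate smooth bounded strictly convex $F:(-\infty,0]\to\mathbb{R}$, or equivalently an exponential such as $u=-\exp(\alpha|\varphi|-\langle\varphi,\nu\rangle)$), whose Laplacian is bounded below by a positive constant on a near-supremum level set $\{u>u^*-\eta\}$. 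The weak maximum principle equivalent to stochastic completeness then forces a sequence $\{x_k\}$ with $u(x_k)\to u^*$ and $\Delta_M u(x_k)<1/k$, contradicting this positive lower bound.

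The main obstacle is engineering the test function so that $\Delta_M u$ stays uniformly positive along a near-maximum sequence in \emph{both} regimes: when $|\varphi(x_k)|$ stays bounded (handled by the $\Delta_M|\varphi|\gtrsim 1/|\varphi|$ estimate coming from minimality) and when $|\varphi(x_k)|\to\infty$ (where the non-degeneracy $\alpha>0$ is essential to prevent the tangent planes of $\varphi(M)$ from asymptotically absorbing all the relevant gradient contributions into the axis direction). Once stochastic completeness is in hand from the first step, the cleanest route is to appeal to the general non-immersion theorem for stochastically complete submanifolds of non-degenerate cones established in \cite{prsMemoirs}, whose hypotheses are at this point all verified.
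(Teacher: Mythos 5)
Your proof follows the paper's own two-step argument exactly: first establish stochastic completeness of $(M,\metric)$, then invoke the non-immersion theorem for stochastically complete minimal submanifolds of non-degenerate Euclidean cones (the paper cites Theorem 1.4 of \cite{MaR}, i.e. Mari--Rigoli, rather than \cite{prsMemoirs}; your middle two paragraphs sketching a direct proof of that theorem are superfluous once the citation is in place, and as written they stop short of an actual construction anyway). The one point that needs fixing is in your first step: hypothesis \eqref{BoundX} only requires $G$ to satisfy \eqref{hpG}, and such a $G$ need not be dominated by any function of the iterated-logarithm form $r^2\prod_{j}(\log^{(j)}r)^2$ --- for instance, a nondecreasing $G$ that is piecewise constant with $G= t_k^4$ on $[t_k,t_k^2)$ satisfies \eqref{hpG}(iii)--(iv) yet equals $r^4$ along the sequence $t_k$ --- so Corollary \ref{2.54} as stated does not cover the full hypothesis of the Corollary. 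The paper closes exactly this loophole by offering the alternative route of Remark \ref{rm15}: the soliton equation gives $\Ric_X(\nabla r,\nabla r)=\lambda$, Lemma \ref{2.18} then gives $\Delta r\leq B(\sqrt{\overline G}+\sqrt{G})$, and Theorem 1.9 of \cite{prsMemoirs} applied to $\gamma=r^2$ yields the full Omori--Yau maximum principle, hence stochastic completeness, for \emph{every} $G$ satisfying \eqref{hpG}. Replace your volume-growth/Grigor'yan step by that remark (or add the hypothesis that $G$ has the iterated-logarithm form) and the argument is complete.
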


\begin{proof}
By \eqref{BoundX} and the above Corollary \ref{2.54}, or by Remark
\ref{rm15}, $(M, \left\langle \,,\,\right\rangle)$ is
stochastically complete. Then the result follows from Theorem 1.4
in \cite{MaR}.
\end{proof}

\bibliographystyle{plain}
\bibliography{BiblioNGSolMRRSub}

\end{document}